\newtheorem{thm}{Theorem}[section]
\newtheorem{defi}{Definition}[section]
\newtheorem{lem}{Lemma }[section]
\newtheorem{cor}{Corollary }[section]
\newtheorem{rem}{Remark}[section]
\newtheorem{prop}{Proposition}[section]
\newcommand{\pois}{\mathrm{Poi}}
\newcommand{\expo}{\mathrm{Exp}}
\renewcommand{\d}{\mathrm{d}}
\title{Upper Bound of Bayesian Generalization Error in Non-Negative Matrix Factorization}
\author{Naoki HAYASHI, Sumio WATANABE}
\begin{document}
 \columnseprule=0.3mm
\maketitle
\begin{abstract}
Non-negative matrix factorization ( NMF ) is a new knowledge discovery method that is 
used for text mining, signal processing, bioinformatics, and consumer analysis. 
However, its basic property as a learning machine is not yet clarified, as it is not a regular statistical model,
resulting that theoretical optimization method of NMF has not yet established. 
In this paper, we study the real log canonical threshold of NMF and give an upper bound of the generalization error in Bayesian learning. 
The results show that the generalization error of the matrix factorization can be made smaller 
than regular statistical models if Bayesian learning is applied. 
\end{abstract}
\section{Introduction}
Recently,
non-negative matrix factorization ( NMF ) \cite{Paatero,Cemgil} has been applied to text mining \cite{Xu}, signal processing \cite{Lee,Xu2,Jia}, bioinformatics \cite{Kim}, and
consumer analysis \cite{Kohjima}. 
Experiments has shown that a new knowledge discovery method is derived by NMF, however,
its mathematical property as a learning machine is not yet clarified, since it is not a regular statistical model.
A statistical model is called regular if a function from a parameter to a probability
density function is one-to-one and if the likelihood function can be approximated by a Gaussian function.

It is proved that, if a statistical model
is regular and if a true distribution is realizable by a statistical model, then the generalization error 
 is asymptotically  equal to $d/(2n)$,
where $d$, $n$, and the generalization error are the dimension of the parameter, the sample size, and
the expected Kullback-Leibler divergence of the true distribution and the estimated learning machine, respectively. 
However, 
the statistical model used in NMF is not regular because the map from a parameter to a probability
density function is not injective. As a result, its generalization error is still unknown. 
If there is a theoretical value of generalization error,  we can confirm whether the numerical calculation result is correct or not. 

There are many non-regular statistical models in machine learning. For example,
a neural network, a reduced rank regression,  a normal mixture, a hidden Markov model, and Boltzmann machine are 
such examples. From the theoretical point of view, the generalization error of a non-regular learning machine in Bayesian learning was 
proved to be asymptotically equal to $\lambda/n$, where $\lambda$ is a real log canonical threshold  ( RLCT ) \cite{Watanabe1,Watanabe2}. 
The RLCTs for several learning machines, have been clarified. In fact, 
 a 3-layer neural network \cite{Watanabe2}, a reduced rank regression \cite{Aoyagi1},  a normal mixture \cite{Yamazaki1}, and a hidden Markov model \cite{Yamazaki2},
they are clarified by using resolution of singularities. A statistical model selection method using RLCT has also been proposed \cite{Drton}. 

In this paper, we theoretically derive the upper bound of the RLCT of NMF, by which we can derive the upper bound of the Bayesian generalization error of NMF. This is the first derivation of the theoretical generalization error in Bayesian NMF. 
This paper consists of five parts.
 In the second section, we describe the upper bound of the RLCT in NMF in the Main Theorem.
 In the third section, we mathematically prepare the proof of the Main Theorem. In the fourth section,
 we prove the Main Theorem. And in the fifth section, we describe a theoretical application of the Main Theorem to Bayesian learning.

 \section{Framework and Main Result} 
 
 In this section, we explain the framework of Bayesian learning and introduce the main result of this paper. 
 
 \subsection{Framework of Bayesian Learning} 
 
 First, we explain the general theory of Bayesian learning. 
 
Let $q(x)$ and $p(x|\theta)$ be probability density functions on a finite dimensional real Euclidean space, where $\theta$ is a parameter. 
In learning theory, $q(x)$ and $p(x|\theta)$ represent a true distribution and a learning machine with $\theta$ respectively. 
A probability density function on a set of parameters $\varphi(\theta)$ is called a prior. 
Let $X^n=(X_1,X_2,...,X_n)$ be a set of random variables that are independently subject to $q(x)$, where $n$ and 
$X^n$ are referred to as the sample size and training data. 
The posterior distribution of $w$ is defined by
\[
p(\theta|X^n)=\frac{1}{Z}\varphi(\theta)\prod_{i=1}^n p(X_i|\theta),
\]
where $Z$ is the normalizing constant that is determined by the condition $\int p(\theta|X^n) \d \theta=1$. 
The Bayesian predictive distribution is also defined by
\[
p(x|X^n)=\int p(x|\theta)p(\theta|X^n)\d \theta. 
\]
The generalization error $G_n$ is defined by the Kullback-Leibler divergence from the true distribution $q(x)$ and
the predictive one $p(x|X^n)$,
\[
G=\int q(x)\log\frac{q(x)}{p(x|X^n)}\d x.
\]
 Note that $G$ is a function of $X^n$ hence it is also a random variable. Its expected value 
overall training data $\mathbb{E}[G]$ is called the expected generalization error. 
Assume there exists a parameter $\theta_0$ that satisfies $q(x)=p(x|\theta_0)$. 
By the singular learning theory \cite{Watanabe1, Watanabe2}, it was proven that 
\[
\mathbb{E}[G] =\frac{\lambda}{n}+o \left(\frac{1}{n} \right)
\]
holds when $n$ tends to infinity,  even if the posterior distribution can not be approximated by
any normal distribution. The constant $\lambda$ is the RLCT which is
an important birational invariant in algebraic geometry. From the mathematical point of view, RLCT is
characterized by the following property. We defined a zeta function by
\begin{equation}
\label{zeta}
\zeta(z)=\int K(\theta)^z\varphi(\theta)\d\theta,
\end{equation}
where
\[
K(\theta)=\int q(x)\log\frac{q(x)}{p(x|\theta)}\d x.
\]
Then this 
is holomorphic in $\mathrm{Re}(z)>0$ 
which can be analytically continued to a unique meromorphic function on the entire complex plane. 
The poles of this extended function are all negative rational numbers. 
Let $(-\lambda)$ be the nearest pole to the origin; $\lambda$ is then equal to the RLCT. 
If $p(x|\theta)$ is regular then $\lambda=d/2$; however, it is not usually general. In this paper we show an upper bound
of the RLCT of the NMF. 

\subsection{Main Theorem}

Second, we introduce the main result of this paper. 
In the followings, $\theta=(X,Y)$ is a parameter and $x=W$ is an observed random variable.

Let $\mathrm{M}(M,N,C)$ be a set of $M\times N$ matrices whose elements are in $C$, where $C$ is a subset of $\mathbb{R}$.
Let $K$ be a compact subset of $\mathbb{R}_{\geqq 0}=\{x \in \mathbb{R} | x \geqq 0\}$ and let 
$K_0$ be a compact of subset $\mathbb{R}_{>0}=\{x \in \mathbb{R} | x > 0\}$.
We denote that $X \! \in \! \mathrm{M}(M,H,K)$, $Y \! \in \! \mathrm{M}(H,N,K)$ and 
$A \in \mathrm{M}(M,H_0,K_0)$, $B \in \mathrm{M}(H_0,N,K_0)$ are NMFs of $AB$ such that they give the minimal $H_0$, where $H \geqq H_0$ and $\{(x,y,a,b) \in K^2 \times K_0^2|xy=ab \} \ne \emptyset$. We set that $\| \cdot \|$ is Frobenius norm.

\begin{defi}[{\bf An RLCT of NMF}]
Assume that the largest pole of the function of one complex variable $z$, 
\[
\zeta(z)=\int_{\mathrm{M}(M,H,K) } \mathrm{d}X \int_{\mathrm{M}(H,N,K) } \mathrm{d}Y
 \Bigl(\| XY-AB \|^2\Bigr)^z 
\]
is equal to $(-\lambda)$. Then 
$\lambda$ is said to be the RLCT of the NMF.
\end{defi}

In this paper, we prove the following theorem.

\begin{thm}[{\bf Main Theorem}]\label{thm:main}
The RLCT of NMF $\lambda$ satisfies the following inequality:
$$\lambda \leqq \frac{1}{2} \left[ (H-H_0)\min\{M,N\} +H_0 (M+N-1) \right].$$
If $H=H_0=1$, then the above bound gives the exact value.
\end{thm}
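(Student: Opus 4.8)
The plan is to bound the analytic function $K(X,Y)=\|XY-AB\|^2$ from above by one whose real log canonical threshold is transparent, using two facts that I would record in the preparatory section: (i) if $0\le K_1\le cK_2$ on the parameter space then $\lambda_{K_1}\le\lambda_{K_2}$; and (ii) for non-negative functions $f(u),g(v)$ whose variables $u,v$ are disjoint, $\mathrm{RLCT}(f+g)=\mathrm{RLCT}(f)+\mathrm{RLCT}(g)$ and $\mathrm{RLCT}(fg)=\min\{\mathrm{RLCT}(f),\mathrm{RLCT}(g)\}$. First I would split $X=[\,X_1\ X_2\,]$ into its first $H_0$ columns and its last $H-H_0$ columns, and $Y=[\,Y_1^{\top}\ Y_2^{\top}\,]^{\top}$ correspondingly, so that $XY=X_1Y_1+X_2Y_2$ and $XY-AB=(X_1Y_1-AB)+X_2Y_2$. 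Then $K\le 2\|X_1Y_1-AB\|^2+2\|X_2Y_2\|^2$, and the two summands involve disjoint blocks of coordinates, so $\lambda\le\lambda_{\mathrm{main}}+\lambda_{\mathrm{exc}}$, where $\lambda_{\mathrm{main}}$ and $\lambda_{\mathrm{exc}}$ are the RLCTs of $\|X_1Y_1-AB\|^2$ and $\|X_2Y_2\|^2$.

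For the excess term, submultiplicativity of the Frobenius norm gives $\|X_2Y_2\|^2\le\|X_2\|^2\|Y_2\|^2$; since $\|X_2\|^2$ and $\|Y_2\|^2$ are sums of squares of the coordinates in a positive orthant, a polar-coordinate computation yields $\mathrm{RLCT}(\|X_2\|^2)=\tfrac12 M(H-H_0)$ and $\mathrm{RLCT}(\|Y_2\|^2)=\tfrac12 N(H-H_0)$, whence the product rule gives $\lambda_{\mathrm{exc}}\le\tfrac12(H-H_0)\min\{M,N\}$.

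For the main term, let $x_k,a_k$ be the $k$-th columns of $X_1,A$ and $y_k,b_k$ the transposes of the $k$-th rows of $Y_1,B$, so that $X_1Y_1-AB=\sum_{k=1}^{H_0}(x_ky_k^{\top}-a_kb_k^{\top})$ and hence $\|X_1Y_1-AB\|^2\le H_0\sum_{k=1}^{H_0}\|x_ky_k^{\top}-a_kb_k^{\top}\|^2$, once again a sum over disjoint blocks. Thus $\lambda_{\mathrm{main}}\le\sum_k\mu_k$, where $\mu_k$ is the RLCT of $\|x_ky_k^{\top}-a_kb_k^{\top}\|^2$ over $x_k\in K^M,\ y_k\in K^N$ with $a_k,b_k$ fixed positive vectors, and the crux of the whole proof is that $\mu_k\le\tfrac12(M+N-1)$. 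To obtain this I would localize at the zero-set point $(x_k,y_k)=(a_k,b_k)$ (which may be taken interior) and introduce the orbit-adapted coordinates $t=x_{k,1}/a_{k,1}$, $\xi_i=x_{k,i}-ta_{k,i}$ $(i\ge2)$, $\eta_j=y_{k,j}-b_{k,j}/t$, whose Jacobian is the nonzero constant $1/a_{k,1}$; a direct expansion gives $\|x_ky_k^{\top}-a_kb_k^{\top}\|^2=\sum_j(ta_{k,1}\eta_j)^2+\sum_{i\ge2}\sum_j\bigl(ta_{k,i}\eta_j+t^{-1}b_{k,j}\xi_i+\xi_i\eta_j\bigr)^2$, which on a bounded neighbourhood is at most a constant times $\|\xi\|^2+\|\eta\|^2$. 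Since $\|\xi\|^2+\|\eta\|^2$ is a nondegenerate quadratic form in the $M+N-1$ variables $(\xi,\eta)$, its RLCT equals $\tfrac12(M+N-1)$, so $\mu_k\le\tfrac12(M+N-1)$ and $\lambda_{\mathrm{main}}\le\tfrac12 H_0(M+N-1)$. Adding the two bounds gives the inequality claimed in the theorem.

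For $H=H_0=1$ there is no excess term, so $\lambda=\mu_1$, and the same expansion shows $\|xy^{\top}-ab^{\top}\|^2$ is pinched between two positive multiples of $\|\xi\|^2+\|\eta\|^2$ near $(a,b)$: the zero set is exactly the smooth curve $\{(ta,b/t)\}$ (because $ab^{\top}\ne0$ forces any non-negative rank-one factorization to be a rescaling), the function vanishes identically along it, and its transverse Hessian at $(a,b)$ is nondegenerate, since the quadratic form $(u,v)\mapsto\|av^{\top}+ub^{\top}\|^2$ has one-dimensional radical equal to the tangent of the curve and is positive definite transverse to it; so the local RLCT is $\tfrac12(M+N-1)$ at every point of the zero set and $\lambda=\tfrac12(M+N-1)$ exactly. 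I expect the rank-one computation to be the main obstacle: it is where the scaling ambiguity of a non-negative factorization must be quotiented out so that the $-1$ (and, after summation, the $-H_0$) appears, and it carries the technical burden of controlling the bilinear cross term $\xi_i\eta_j$ and of checking that the chosen zero-set point lies in the interior of the compact parameter boxes; securing the additivity and product rules for the RLCT in the preparatory section is a lighter but indispensable prerequisite.
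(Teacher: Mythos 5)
Your proposal follows the same master decomposition as the paper: split $X=[X_1\,X_2]$, $Y=[Y_1^\top\,Y_2^\top]^\top$ into the first $H_0$ and last $H-H_0$ inner indices, bound $\|XY-AB\|^2$ by a constant times $\|X_1Y_1-AB\|^2+\|X_2Y_2\|^2$, add RLCTs over disjoint variable blocks, and attribute $\tfrac12 H_0(M+N-1)$ to the main term and $\tfrac12(H-H_0)\min\{M,N\}$ to the excess. Where you genuinely diverge from the paper is in the key rank-one lemma $\mu_k\le\tfrac12(M+N-1)$. The paper establishes this via a combinatorial inequality (Lemma \ref{lemineq}, proved by double induction on $M,N$): it shows that $\sum_{i,j}f_{ij}^2$ with $f_{ij}=x_iy_j-a_ib_j$ is sandwiched between constant multiples of the $M+N-1$ ``edge'' terms $f_{11},\,f_{i1}\,(i\ge2),\,f_{1j}\,(j\ge2)$, and then argues these edge terms can serve (together with one free coordinate $x_1$) as a local coordinate system in which the objective is a nondegenerate quadratic. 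You instead localize at the interior zero-set point $(a_k,b_k)$, introduce the orbit-adapted coordinates $t=x_{k,1}/a_{k,1}$, $\xi_i=x_{k,i}-ta_{k,i}$, $\eta_j=y_{k,j}-b_{k,j}/t$, verify the Jacobian is a nonzero constant, and read off $\|x_ky_k^\top-a_kb_k^\top\|^2\lesssim\|\xi\|^2+\|\eta\|^2$ directly from the explicit expansion. This bypasses the induction entirely and makes the scaling symmetry quotient explicit. Likewise, for the excess term you use Frobenius-norm submultiplicativity $\|X_2Y_2\|^2\le\|X_2\|^2\|Y_2\|^2$ plus the product rule for disjoint variables, whereas the paper uses the element-wise two-sided bound $\sum_k x^2y^2\le(\sum_k xy)^2\le H\sum_k x^2y^2$ and factors the sum over $k$; both give $\tfrac12(H-H_0)\min\{M,N\}$. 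Finally, for the $H=H_0=1$ exactness you invoke a Morse--Bott style argument (smooth one-dimensional zero curve with nondegenerate transverse Hessian), while the paper reuses its two-sided inequality from Corollary \ref{corineq} and the per-coordinate computation of Lemma \ref{lemH1}; again both are valid. In short: same decomposition and final bookkeeping, but a more geometric, non-inductive treatment of the rank-one block; what the paper's inductive lemma buys is an inequality that does not presuppose a particular base point, while your argument buys a shorter proof at the cost of an explicit (but harmless for an upper bound) choice of localization point and the standing caveat, which you flag, that the reference point lies in the closure of the parameter box.
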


We prove this theorem in the next section. As an application of this theorem, we obtain an upper bound of the Bayesian generalization error of NMF. Sometimes NMF is studied in the case when only one target matrix is
decomposed, however, in general, decomposition of a set of independent matrices should be studied
because target matrices are often obtained daily, monthly, or different places \cite{Kohjima}. In such cases, 
decomposition of a set of matrices results in statistical inference. The following theorem shows 
a statistical bound of Bayesian estimation of NMF. 

\begin{thm}\label{thm:bayes}
Let the probability density functions of $W\in \mathrm{M}(M,N,K)$ be $q(W)$ and $p(W|X,Y)$, which represent 
a true distribution and a learning machine respectively defined by 
\begin{eqnarray*}
q(W) & \propto & \exp \left( -\frac{1}{2}\|W-AB\|^2 \right), \\
p(W|X,Y) & \propto & \exp \left(-\frac{1}{2}\|W-XY\|^2 \right). 
\end{eqnarray*}
Also let $\varphi(X,Y) $ be a probability density function such that it is positive on a compact subset of 
${\mathrm{M}(M,H,K) \times \mathrm{M}(H,N,K) }$ including $0$.
Then, the expected generalization error $\mathbb{E}[G]$ satisfies the following inequality:
\[
\mathbb{E}[G] \leqq \frac{1}{2n} [(H-H_0)\min\{M,N\} +H_0 (M+N-1)]
+o\left(\frac{1}{n}\right).
\]
If $H=H_0=1$, then the above bound gives the exact value.
\end{thm}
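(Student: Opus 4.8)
The plan is to derive Theorem~\ref{thm:bayes} from Theorem~\ref{thm:main} via the asymptotic formula $\mathbb{E}[G]=\lambda/n+o(1/n)$ of singular learning theory that was recalled in Section~2.1; the only real work is to verify that the constant $\lambda$ occurring there coincides with the RLCT of NMF defined above.

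First I would compute the Kullback--Leibler divergence of the model. Because $q(W)$ and $p(W\,|\,X,Y)$ are Gaussian densities sharing the identity covariance and having means $AB$ and $XY$, a direct calculation gives $K(X,Y)=\int q(W)\log\{q(W)/p(W|X,Y)\}\,\d W=\frac{1}{2}\|XY-AB\|^{2}$ in the untruncated case; in general, i.e.\ when the densities are normalised over the compact set $\mathrm{M}(M,N,K)$, expanding the two log-partition functions about the zero set shows that $K(X,Y)$ equals a positive-definite quadratic form in $XY-AB$ plus a higher-order remainder, so that $c_{1}\|XY-AB\|^{2}\leqq K(X,Y)\leqq c_{2}\|XY-AB\|^{2}$ in a neighbourhood of $\{XY=AB\}$ for some constants $0<c_{1}\leqq c_{2}$. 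In either case $K$ is real analytic on the compact parameter domain and vanishes exactly on the non-empty algebraic set $\{(X,Y):XY=AB\}$, so the fundamental conditions of singular learning theory are met and the cited formula yields $\mathbb{E}[G]=\lambda_{\varphi}/n+o(1/n)$, where $-\lambda_{\varphi}$ is the largest pole of $z\mapsto\int K(X,Y)^{z}\varphi(X,Y)\,\d X\,\d Y$.

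Next I would identify $\lambda_{\varphi}$ with the RLCT $\lambda$ of the definition above. The RLCT is a local birational invariant of $K$ near its zero set, and there $K$ is comparable to $\|XY-AB\|^{2}$ in the sense above; moreover the scalar factors $(1/2)^{z}$ and $c_{i}^{z}$ are entire and nowhere zero. Hence replacing $K^{z}$ by $(\|XY-AB\|^{2})^{z}$ leaves the largest pole unchanged. Finally, since $\varphi$ is positive and bounded on a compact set containing $0$, and hence on a neighbourhood of a true parameter at which the RLCT is attained (this is the role of that hypothesis together with the compatibility condition on $K$ and $K_{0}$), the prior $\varphi$ may be replaced by Lebesgue measure on $\mathrm{M}(M,H,K)\times\mathrm{M}(H,N,K)$ without moving the largest pole. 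Therefore $\lambda_{\varphi}=\lambda$, and Theorem~\ref{thm:main} gives $\lambda_{\varphi}=\lambda\leqq\frac{1}{2}[(H-H_{0})\min\{M,N\}+H_{0}(M+N-1)]$. Substituting into $\mathbb{E}[G]=\lambda_{\varphi}/n+o(1/n)$ produces the stated inequality, and when $H=H_{0}=1$ the Main Theorem gives the equality $\lambda=\frac{1}{2}(M+N-1)$, so the bound is exact.

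The main obstacle I anticipate is this identification step: showing rigorously that neither the truncation of the Gaussians nor the passage from $\varphi$ to Lebesgue measure changes the RLCT. Both reductions rely on the locality of the RLCT and on its invariance under replacing the defining function by one squeezed between two positive multiples of it, which are standard, but the delicate part is the bookkeeping of the set of true parameters and of the support of $\varphi$, in particular confirming that that support really contains a true parameter realising the minimal local RLCT used in the proof of Theorem~\ref{thm:main}. By contrast, the Gaussian Kullback--Leibler computation and the invocation of $\mathbb{E}[G]=\lambda/n+o(1/n)$ are routine.
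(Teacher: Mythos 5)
Your proposal is correct and takes essentially the same route as the paper, which treats Theorem~\ref{thm:bayes} as an immediate consequence of Theorem~\ref{thm:main} via the singular learning formula $\mathbb{E}[G]=\lambda/n+o(1/n)$ and the observation (stated later, in Section~5.3) that the Kullback--Leibler divergence satisfies $KL(X,Y)\sim\|XY-AB\|^{2}$. The extra care you take with the truncation of the Gaussians and with the replacement of $\varphi$ by Lebesgue measure is exactly the bookkeeping the paper leaves implicit, and your invocations of locality and order-isomorphism of the RLCT are the correct tools to discharge it.
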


In this theorem, we study a case when a set of random matrices $W_1,W_2,...,W_n$ 
are observed and the true decomposition $A$ and $B$ are statistically estimated. A statistical model
$p(W|X,Y)$ which has parameters $(X,Y)$ are employed for estimation. Then the generalization error 
of Bayesian estimation is given by this theorem. 
If Theorem\ref{thm:main} is proved, Theorem\ref{thm:bayes} is immediately derived. 
Therefore, we prove Main Theorem\ref{thm:main} in the followings.

\section{Preparation}
We need the following five lemmas in order to prove the Main Theorem.
\begin{lem}
\label{lemH0}
Put $\theta = (X,Y)$, $\varphi=1$, and $K(X,Y)=\|XY\|^2$ in equlity $(\ref{zeta})$,where $X \in \mathrm{M}(M,H,K)$ and $Y \in \mathrm{M}(H,N,K)$. 
The RLCT $\lambda$ satisfies the following equality $:$
$$\lambda = \frac{H\min\{M,N\}}{2}.$$
\end{lem}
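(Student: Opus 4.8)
The plan is to establish the two bounds $\lambda\le\tfrac12 H\min\{M,N\}$ and $\lambda\ge\tfrac12 H\min\{M,N\}$ separately. Since $\|XY\|^2=\|(XY)^{\top}\|^2=\|Y^{\top}X^{\top}\|^2$ and the substitution $(X,Y)\mapsto(Y^{\top},X^{\top})$ is a measure-preserving bijection from $\mathrm{M}(M,H,K)\times\mathrm{M}(H,N,K)$ onto $\mathrm{M}(N,H,K)\times\mathrm{M}(H,M,K)$, the function $\zeta$, and hence $\lambda$, is symmetric under $M\leftrightarrow N$. So I may assume $M\le N$ and it suffices to prove $\lambda=HM/2$.

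For the upper bound I would use submultiplicativity of the Frobenius norm, $\|XY\|^2\le\|X\|^2\|Y\|^2$. For real $z<0$ this gives $(\|XY\|^2)^z\ge(\|X\|^2)^z(\|Y\|^2)^z$, hence $\zeta(z)\ge\bigl(\int_{\mathrm{M}(M,H,K)}(\|X\|^2)^z\,\mathrm{d}X\bigr)\bigl(\int_{\mathrm{M}(H,N,K)}(\|Y\|^2)^z\,\mathrm{d}Y\bigr)$. Each factor is the zeta function of a sum of squares of all the coordinates, integrated over a box meeting the origin; passing to polar coordinates shows the first has its largest pole at $z=-MH/2$ and the second at $z=-NH/2$. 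Because $M\le N$ the product already diverges as $z\downarrow-MH/2$, which forces $\zeta$ to diverge there too, i.e.\ $\lambda\le MH/2$.

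For the lower bound I would induct on $H$. The case $H=1$ is immediate: here $\|XY\|^2=\|X\|^2\|Y\|^2$ with $X\in K^M$ and $Y\in K^N$ lying in disjoint coordinates, so its RLCT equals $\min\{\lambda(\|X\|^2),\lambda(\|Y\|^2)\}=\min\{M/2,N/2\}=M/2$. For the inductive step, write $X$ as the concatenation of its first $H-1$ columns $X'$ and its last column $X_H$, and likewise $Y$ as its first $H-1$ rows $Y'$ stacked on its last row $Y_H$; then $XY=X'Y'+X_HY_H$. Since every entry of $X$ and $Y$ is non-negative, the Frobenius inner product $\langle X'Y',X_HY_H\rangle$ is non-negative, so $\|XY\|^2\ge\|X'Y'\|^2+\|X_HY_H\|^2=\|X'Y'\|^2+\|X_H\|^2\|Y_H\|^2$. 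For real $z<0$ this yields $\zeta(z)\le\int\bigl(\|X'Y'\|^2+\|X_H\|^2\|Y_H\|^2\bigr)^z$; the two summands depend on disjoint blocks of coordinates and the domain factors accordingly, so by additivity of the RLCT over disjoint variables the pole of the right-hand side is at $-\bigl(\lambda(\|X'Y'\|^2)+\lambda(\|X_H\|^2\|Y_H\|^2)\bigr)=-\bigl((H-1)M/2+M/2\bigr)=-HM/2$, using the inductive hypothesis for the $M\times(H-1)$ by $(H-1)\times N$ factor and the product computation once more for $\|X_H\|^2\|Y_H\|^2$. Hence $\zeta$ converges on $\mathrm{Re}(z)>-HM/2$, i.e.\ $\lambda\ge HM/2$; combined with the upper bound this gives $\lambda=HM/2=H\min\{M,N\}/2$.

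The main obstacle is the lower bound, and within it the inequality $\|XY\|^2\ge\|X'Y'\|^2+\|X_H\|^2\|Y_H\|^2$: this is exactly where non-negativity of the parameters is essential, since over all of $\mathbb{R}$ the cross term $\langle X'Y',X_HY_H\rangle$ need not be non-negative, the bound fails, and the RLCT becomes strictly smaller. The remaining ingredients—additivity of the RLCT over disjoint coordinate blocks, the minimum rule for the RLCT of a product of functions in disjoint coordinates, and the polar-coordinate computation for a sum of squares—are standard facts of singular learning theory.
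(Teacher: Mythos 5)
Your proof is correct and reaches the same conclusion, but it is organized differently from the paper's. The paper proves a single two-sided comparison,
$\sum_{k}\|x_k\|^2\|y_k\|^2 \le \|XY\|^2 \le H\sum_{k}\|x_k\|^2\|y_k\|^2$,
where $x_k$ and $y_k$ denote the $k$-th column of $X$ and the $k$-th row of $Y$; the left inequality is exactly your non-negativity observation (all cross terms of $\bigl(\sum_k x_{ik}y_{kj}\bigr)^2$ are non-negative) applied to every $(i,j)$ and $k$ at once, and the right one is Cauchy--Schwarz. This gives RLCT-equivalence of $\|XY\|^2$ to the factored polynomial $\sum_k\|x_k\|^2\|y_k\|^2$, whose RLCT is then evaluated directly as $H\min\{M,N\}/2$ via blow-ups, using additivity over $k$ and the minimum rule within each $k$. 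You instead argue the two directions separately: for $\lambda\le HM/2$ you use submultiplicativity $\|XY\|^2\le\|X\|^2\|Y\|^2$, a looser comparison than the paper's Cauchy--Schwarz bound that nevertheless has the right RLCT; for $\lambda\ge HM/2$ you induct on $H$, peeling off one column/row pair at each step and invoking additivity of the RLCT over disjoint variable blocks together with the product-minimum rule. When unrolled, your induction yields exactly the paper's lower comparison $\|XY\|^2\ge\sum_k\|x_k\|^2\|y_k\|^2$, so both proofs rest on the same core fact --- non-negativity of the cross terms --- and the same standard sum/product rules for RLCTs in disjoint variables. The paper's version is more compact (one sandwich, one pole computation), while yours is more modular, cleanly isolating the role of non-negativity in the inductive step and citing the RLCT composition lemmas by name rather than re-deriving the pole.
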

\begin{proof}
We set $X=(x_{ik})_{i,k=1,1}^{M,H},Y=(y_{kj})_{k,j=1,1}^{H,N}$, then
$$\|XY \|^2=\sum_{i=1}^M \sum_{j=1}^N \Biggl( \sum_{k=1}^H x_{ik}y_{kj} \Biggr)^2.$$
Then, since elements of matrices is nonnegative
$$\sum_{k=1}^H x_{ik}^2 y_{kj}^2 \leqq \Biggl( \sum_{k=1}^H x_{ik}y_{kj} \Biggr)^2 \leqq H \sum_{k=1}^H x_{ik}^2 y_{kj}^2.$$
As a log canonical threshold is not changed by any constant factor and it is order isomorphic, all we have to do is calculating an RLCT of 
$$\sum_{i=1}^M \sum_{j=1}^N \sum_{k=1}^H x_{ik}^2 y_{kj}^2 
=\sum_{k=1}^H \Biggl(\sum_{i=1}^M x_{ik}^2 \Biggr) \Biggl( \sum_{j=1}^N y_{kj}^2 \Biggr).$$

The RLCT $\lambda$ becomes a sum of each ones about $k$. For each $k$, 
we consider blow-ups of variables $\{x_{ik}\}$, $\{y_{kj}\}$ each other.
\begin{eqnarray*}
\lambda &=& \sum_{k=1}^H 
\min\left\{ \Biggl(\sum_{i=1}^M \frac{1}{2} \Biggr), \Biggl( \sum_{j=1}^N \frac{1}{2} \Biggr)
\right\} \\
&=& \sum_{k=1}^H \frac{\min\{M,N\}}{2} = \frac{H\min\{M,N\}}{2}.
\end{eqnarray*}
\end{proof}

\begin{rem}
Lemma $\ref{lemH0}$ means that the bound in Main Theorem formally gives the exact value if $H_0=0$ i.e. $AB=O$.
\end{rem}

\begin{lem}
\label{lemH1}
If $H_0=H=1$, the equal sign of the Main Theorem holds.
\end{lem}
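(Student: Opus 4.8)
The plan is to reduce the zeta integral with $K(X,Y) = \|XY - AB\|^2$ to a tractable normal form in the scalar case and compute its real log canonical threshold exactly. When $H = H_0 = 1$ we have $X = (x_1,\dots,x_M)^\top \in \mathrm{M}(M,1,K)$, $Y = (y_1,\dots,y_N) \in \mathrm{M}(1,N,K)$, and $A = (a_1,\dots,a_M)^\top$, $B = (b_1,\dots,b_N)$ with all $a_i, b_j > 0$; writing $c = AB$ so that $c_{ij} = a_i b_j$, the function becomes
\[
K(X,Y) = \sum_{i=1}^M \sum_{j=1}^N \left( x_i y_j - a_i b_j \right)^2.
\]
Since $AB$ has rank one with strictly positive entries, the zero set of $K$ inside $\mathrm{M}(M,1,K)\times\mathrm{M}(H,N,K)$ is the rank-one variety $\{(X,Y) : x_i y_j = a_i b_j\ \forall i,j\}$, which near any point with $x_1 > 0$ is parametrized by $t \mapsto (t a_1,\dots; b_1/t,\dots)$ — a one-dimensional smooth manifold. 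First I would shift coordinates to sit at the true parameter, e.g. $x_i = a_i + u_i$, $y_j = b_j + v_j$, and examine the quadratic form of $K$ at the origin.

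Second, I would diagonalize that Hessian. Expanding, $x_i y_j - a_i b_j = a_i v_j + b_j u_i + u_i v_j$, so the leading quadratic part is $Q(u,v) = \sum_{i,j}(a_i v_j + b_j u_i)^2$. This quadratic form in the $M + N$ variables $(u,v)$ is degenerate: its kernel is exactly the tangent line to the rank-one variety, namely the direction $(u,v) \propto (a_1,\dots,a_M, -b_1,\dots,-b_N)$ (one checks $a_i(-b_j) + b_j a_i = 0$). One can verify the kernel is precisely one-dimensional because the $a_i$ and $b_j$ are strictly positive. Hence after a linear change of variables $Q$ is equivalent to a nondegenerate quadratic form in $M + N - 1$ variables, i.e. $\sum_{\ell=1}^{M+N-1} z_\ell^2$. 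By Watanabe's theory, since $K$ is (up to higher order, and after localizing where the prior is positive) a nondegenerate quadratic form in $M+N-1$ variables plus a direction along which $K \equiv 0$, the RLCT contributed is $(M+N-1)/2$, and the remaining free direction contributes nothing. This matches the Main Theorem's bound at $H = H_0 = 1$, namely $\tfrac12[0 \cdot \min\{M,N\} + 1\cdot(M+N-1)] = (M+N-1)/2$.

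To make this rigorous rather than heuristic, I would either (i) exhibit an explicit change of variables — using one blow-up centered at the rank-one locus, or simply the substitution $x_1 = t$, $x_i = t\xi_i$ for $i\ge 2$, $y_j = b_j/t + \eta_j$ — turning $K$ into $t^2$ times a nondegenerate quadratic in the $\xi_i, \eta_j$ (times a manifold direction in $t$), from which the threshold reads off directly; or (ii) invoke the standard fact that for an analytic function whose zero set is a smooth $r$-dimensional manifold along which the Hessian in the $d-r$ transverse directions is nondegenerate, the RLCT equals $(d-r)/2$, here with $d = M+N$ and $r = 1$. The main obstacle — and the step deserving genuine care — is verifying that the transverse Hessian of $Q$ is genuinely nondegenerate (kernel exactly one-dimensional) and that the compact-domain and nonnegativity constraints do not cut the effective dimension or put the true point on a boundary in a way that changes the threshold; the strict positivity of the entries of $A$ and $B$ (so that $AB$ lies in the interior relative to the rank-one stratum) is exactly what rules out these pathologies, and I would state that dependence explicitly.
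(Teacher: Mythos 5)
Your approach is correct but genuinely different from the one in the paper, and it is worth contrasting the two. The paper proceeds combinatorially: it establishes by a fairly intricate double induction on $(M,N)$ (Lemma~\ref{lemineq} and Corollary~\ref{corineq}) that every $f_{ij} = x_iy_j - a_ib_j$ lies, up to a constant factor, in the ideal generated by the first row and first column, namely $f_{11}, f_{21}, \dots, f_{M1}, f_{12}, \dots, f_{1N}$. This shows $\sum_{ij} f_{ij}^2$ has the same RLCT as $\sum_{i\ge2} f_{i1}^2 + \sum_{j\ge 2} f_{1j}^2 + f_{11}^2$, a sum of $M+N-1$ ``independent'' squares each with threshold $1/2$, giving $(M+N-1)/2$. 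You instead take the geometric route: recenter at $(A,B)$, observe $x_iy_j - a_ib_j = a_iv_j + b_ju_i + u_iv_j$, note the quadratic part $Q(u,v) = \sum_{ij}(a_iv_j+b_ju_i)^2$ has a one-dimensional kernel spanned by $(a_1,\dots,a_M,-b_1,\dots,-b_N)$ (this is exactly the tangent to the rank-one zero locus $\{(tA, B/t)\}$), and invoke the Morse--Bott-type normal form for RLCTs: if the zero set is a smooth $r$-manifold and the transverse Hessian is nondegenerate, then $\lambda = (d-r)/2$. Your kernel computation is right (divide $a_iv_j + b_j u_i = 0$ by $a_i b_j > 0$ to see $u_i/a_i$ and $-v_j/b_j$ must all coincide), and the count $d = M+N$, $r = 1$ reproduces the target. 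Your own flag about the two remaining verifications is apt: you must confirm the normal form holds uniformly (not just at $(A,B)$, which does follow since the kernel stays $1$-dimensional at every $(tA,B/t)$ with $t>0$), and that the compactness/nonnegativity constraints do not degrade the pole, which they do not because truncating the domain to a half-space or a box does not move the largest pole of $\int (\sum z_\ell^2)^s\, dz$, only its residue, provided the zero locus meets the interior of the parameter region at some point --- which the assumption $K_0 \subset \mathbb{R}_{>0}$ guarantees. On balance, your argument is conceptually cleaner and avoids the paper's case-heavy induction, at the cost of having to cite or re-derive the Morse--Bott normal form lemma; the paper's version is entirely elementary but longer.
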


We set 
$X \! = \! (x_{i})_{i=1}^M$. 
$Y \! = \! \left( (y_{j})_{j=1}^N \right)^T$, 
$A \! = \! (a_{i})_{i=1}^M$, 
and $B \! =\! \left( (b_{j})_{j=1}^N \right)^T$.

We prove the next lemma for the proof of Lemma \ref{lemH1}.
\begin{lem}
\label{lemineq}
We put $f_{kl}:=x_k y_l -a_k b_l \ (k,l\in \mathbb{N})$. Then $\forall M,N \in \mathbb{N}_{\geqq 2}=\{n\in\mathbb{N} | n \geqq 2\}$, $\exists C>0$ $s.t.$
\begin{eqnarray}
\label{induc}
f_{MN}^2 \leqq C \Biggl( \sum_{i=2}^M f_{i1}^2 + \sum_{j=2}^N f_{1j}^2 +f_{11}^2 \Biggr).
\end{eqnarray}
\end{lem}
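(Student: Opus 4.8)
The plan is to express $f_{MN}$ in terms of the quantities $f_{i1}$, $f_{1j}$ and $f_{11}$ that appear on the right-hand side of (\ref{induc}), using the bilinear (rank-one) structure $f_{kl} = x_k y_l - a_k b_l$. The key algebraic identity I would exploit is a ``telescoping'' decomposition: writing $x_M y_N - a_M b_N$ and inserting mixed terms such as $x_M b_N$ and $a_M y_N$, one gets $f_{MN} = x_M(y_N - b_N) + b_N(x_M - a_M)$ and similarly $f_{MN} = y_N(x_M - a_M) + a_M(y_N - b_N)$. The point is that the differences $x_M - a_M$ and $y_N - b_N$ are, up to bounded factors, controlled by $f_{M1}$, $f_{1N}$ and $f_{11}$: for instance, on the region where the relevant coordinate of $A$ or $B$ is bounded away from $0$ (which holds since $A \in \mathrm{M}(M,H_0,K_0)$, $B \in \mathrm{M}(H_0,N,K_0)$ with $K_0 \subset \mathbb{R}_{>0}$ compact), one has $a_k b_1(x_k y_1 - a_k b_1)$-type relations letting us solve for $x_k - a_k$ in terms of $f_{k1}$ and $f_{11}$.

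Concretely, I would first establish the two base relations that tie the ``edge'' variables to the ``corner'': from $f_{k1} = x_k y_1 - a_k b_1$ and $f_{11} = x_1 y_1 - a_1 b_1$ one eliminates $y_1$ to bound $|x_k - c\, a_k|$ (for a suitable constant) by a multiple of $|f_{k1}| + |f_{11}|$ on the compact parameter set; symmetrically $|y_l - c'\, b_l|$ is bounded by a multiple of $|f_{1l}| + |f_{11}|$. Substituting these two estimates into the telescoping expression for $f_{MN}$, and using that $x_M$, $y_N$, $a_M$, $b_N$ all range over compact sets (hence are bounded), yields $|f_{MN}| \leqq C'\bigl(|f_{M1}| + |f_{1N}| + |f_{11}|\bigr)$ for some $C' > 0$. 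Squaring and using $(\alpha+\beta+\gamma)^2 \leqq 3(\alpha^2+\beta^2+\gamma^2)$ gives the claimed bound, since $f_{M1}^2$ and $f_{1N}^2$ are among the terms $\sum_{i=2}^M f_{i1}^2$ and $\sum_{j=2}^N f_{1j}^2$ (here $M, N \geqq 2$ is exactly what guarantees the indices $M$ and $N$ fall in the summation ranges $i \geqq 2$, $j \geqq 2$, or coincide with the explicitly included $f_{11}$ term).

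The main obstacle I anticipate is handling the degenerate locus where the coordinates being divided out vanish or nearly vanish --- e.g. solving for $x_k - a_k$ requires dividing by something like $y_1$ or $b_1$, and although $b_1 \in K_0$ is bounded below, $y_1 \in K$ may be arbitrarily small. The way around this is to avoid ever dividing by a parameter of $X$ or $Y$: organize all eliminations so that one only ever divides by entries of $A$ and $B$, which live in the compact set $K_0 \subset \mathbb{R}_{>0}$ and are therefore uniformly bounded away from zero. If a direct elimination still forces a division by a $Y$- or $X$-entry, I would instead argue by a compactness/case-split: on the closed region $\{|y_1| \geqq \varepsilon\}$ the division is safe, and on $\{|y_1| \leqq \varepsilon\}$ one can bound $f_{MN}$ crudely by a constant times $f_{11}$ plus lower-order $f$'s using smallness of $y_1$ directly. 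Assembling finitely many such bounds and taking the largest constant $C$ completes the argument.
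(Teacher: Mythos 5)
Your route is genuinely different from the paper's. The paper establishes the one-step recursion $f_{ij}^2 \leqq C(i,j)\bigl(f_{(i-1)j}^2 + f_{i(j-1)}^2 + f_{(i-1)(j-1)}^2\bigr)$ (obtained from the identity $f_{ij} = \tfrac{x_i}{x_{i-1}} f_{(i-1)j} + \tfrac{a_{i-1}b_j}{x_{i-1}y_{j-1}} f_{i(j-1)} - \tfrac{a_i b_j}{x_{i-1}y_{j-1}} f_{(i-1)(j-1)}$) and then runs an induction on $M+N$. You instead aim directly at $|f_{MN}| \leqq C'\bigl(|f_{M1}| + |f_{1N}| + |f_{11}|\bigr)$ by elimination, which, if completed, is cleaner and avoids induction. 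You are also right to flag the division issue: notably the paper's own proof divides by $x_{i-1}$ and $y_{j-1}$, entries of $K \subset \mathbb{R}_{\geqq 0}$ that may vanish, and never addresses this; so your caution is apt and your compactness/case-split idea is a legitimate fix (the region $\{x_1 y_1 < \varepsilon\}$ is handled by noting that there $|f_{11}| \geqq a_1 b_1 - \varepsilon$ is bounded below while $|f_{MN}|$ is bounded above).

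There is, however, a real gap in the assembly step. Your stated decomposition $f_{MN} = x_M(y_N - b_N) + b_N(x_M - a_M)$ involves the \emph{raw} differences $x_M - a_M$ and $y_N - b_N$, but these cannot be individually controlled by $f_{M1}, f_{1N}, f_{11}$: the rank-one scaling $X \mapsto cX$, $Y \mapsto Y/c$ leaves every $f_{kl}$ unchanged while making both raw differences large. What you can control (as you partly note) are the rescaled differences $x_M - c\, a_M$ with $c = x_1/a_1$ and $y_N - c'\, b_N$ with $c' = y_1/b_1$; for instance the cross-multiplication $y_1 f_{1N} - y_N f_{11} = a_1(b_1 y_N - b_N y_1)$ gives $y_N - c' b_N = \tfrac{1}{a_1 b_1}(y_1 f_{1N} - y_N f_{11})$ with division only by $a_1 b_1 \in K_0^2$, and similarly for $x_M - c\, a_M$. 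But when you substitute \emph{these} into the decomposition, a leftover term $(cc'-1)a_M b_N$ appears, and the argument only closes because $cc' - 1 = \tfrac{x_1 y_1 - a_1 b_1}{a_1 b_1} = \tfrac{f_{11}}{a_1 b_1}$ is itself controlled by $f_{11}$. Your proposal never reconciles the decomposition with the rescaled bounds and never identifies this extra $f_{11}$-term, so as written the final inequality $|f_{MN}| \leqq C'(|f_{M1}|+|f_{1N}|+|f_{11}|)$ does not follow from what precedes it. The same conclusion can also be reached in one stroke from the polynomial identity $x_1 y_1\, f_{MN} = f_{M1} f_{1N} + a_M b_1 f_{1N} + a_1 b_N f_{M1} - a_M b_N f_{11}$, combined with your case-split on the size of $x_1 y_1$; either way, the missing algebra is the substance of the lemma and needs to be written out.
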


\begin{proof}
We arbitrarily take $i,j(2 \leqq i \leqq M,2 \leqq j \leqq N)$ and fix them. We consider the case where $f_{ij}=0$. The elements of the matrices are positive, which causes
\begin{eqnarray*}
x_i y_j-a_i b_j &\!=& \frac{x_i}{x_{i-1}} (x_{i-1}y_j \!-a_{i-1}b_j) \\
&& +\frac{a_{i-1}b_j}{x_{i-1}y_{j-1}}(x_i y_{j-1} \!-a_i b_{j-1}) \\
&&-\frac{a_i b_j}{x_{i-1}y_{j-1}}(x_{i-1}y_{j-1} -a_{i-1}b_{j-1}).
\end{eqnarray*}
Generally, 
$$(\alpha + \beta +\gamma)^2 \leqq 2(\alpha^2 +\beta^2 +\gamma^2), \, \rm{for} \, \forall \alpha ,\beta ,\gamma \in \mathbb{R}$$
is attained. We square both side 
\begin{eqnarray*}
f_{ij}^2 & \leqq & 
2\Biggl(\! \Bigl(\frac{x_i}{x_{i-1}} \Bigr)^2 \! f_{(i-1)j}^2 
+ \Bigl(\frac{a_{i-1}b_j}{x_{i-1}y_{j-1}} \Bigr)^2 \! f_{i(j-1)}^2 
 +\Bigl(\frac{a_i b_j}{x_{i-1}y_{j-1}} \Bigr)^2 \! f_{(i-1)(j-1)}^2 \! \Biggr)
\end{eqnarray*}
Because of $X \in \mathrm{M}(M,1,K)$, $Y \in \mathrm{M}(1,N,K)$ and what $K$ is compact, 
$2\Bigl(\frac{x_i}{x_{i-1}} \Bigr)^2$
, $2\Bigl(\frac{a_{i-1}b_j}{x_{i-1}y_{j-1}} \Bigr)^2$ 
and $2\Bigl(\frac{a_i b_j}{x_{i-1}y_{j-1}} \Bigr)^2$ have a maximum value $m_{(i-1)j}$, $m_{i(j-1)}$ and $m_{(i-1)(j-1)}$ ,respectively. Thus 
$$f_{ij}^2 \leqq m_{(i-1)j} f_{(i-1)j}^2 +m_{i(j-1)} f_{i(j-1)}^2 +m_{(i-1)(j-1)} f_{(i-1)(j-1)}^2.$$
Besides, we set $C(i,j)=\max\{m_{(i-1)j},m_{i(j-1)},m_{(i-1)(j-1)} \}$ and get
\begin{eqnarray}
\label{element}
f_{ij}^2 \leqq C(i,j)(f_{(i-1)j}^2 + f_{i(j-1)}^2 + f_{(i-1)(j-1)}^2 ).
\end{eqnarray}
We prove inequality(\ref{induc}) by mathematical induction of $M,N \in \mathbb{N}_{\geqq 2}$
, using inequality(\ref{element}).

\noindent(Step 1) In the case of $(M,N) \in (\mathbb{N}_{\geqq 2})^2 \ \mathrm{s.t.} \ M+N=4,5$. 

$M=N=2 \Rightarrow$ clear. 
In case of $M=2 \wedge N=3$, using inequality(\ref{element}), we give
$$f_{23}^2 \leqq C(2,3)(f_{13}^2 + f_{22}^2 + f_{12}^2 )$$
and
$$f_{22}^2 \leqq C(2,2)(f_{12}^2 + f_{21}^2 + f_{11}^2 ).$$
We show the following inequality, using them.
$$f_{23}^2 \leqq 
C(2,3) \left\{ f_{13}^2 + C(2,2)( f_{12}^2 + f_{21}^2 + f_{11}^2 ) + f_{12}^2 \right\}.$$
Thus, if we set $C:=C(2,3)\max\{1,C(2,2)\}$, the inequality(\ref{induc}) is attained.
The case of $M=3 \wedge N=2$ can be proven in the same way.

\noindent(Step 2) We assume that inequality(\ref{induc}) is attained if $(M,N) \in (\mathbb{N}_{\geqq 2})^2 \ \mathrm{s.t.} \ M+N=i+j,i+j+1$.


(Case 2-1) In case of $i-k \geqq 2 \wedge j+k \geqq 2 \ \mathrm{i.e.} \ 2-j \leqq k \leqq i-2$.

Using inequality(\ref{element}), 
\begin{eqnarray*}
f_{(i-k+1)(j+k+1)}^2 \leqq C(i-k+1,j+k+1) \bigl( f_{(i-k)(j+k+1)}^2+ f_{(i-k+1)(j+k)}^2 + f_{(i-k)(j-k)}^2 \bigr).
\end{eqnarray*}
According to the above assumption, $\exists C_1,C_2,C_3>0$ s.t.
$$f_{(i-k)(j+k+1)}^2 \leqq C_1 \Biggl( \sum_{i=2}^{i-k} f_{i1}^2 + \sum_{j=2}^{j+k+1} f_{1j}^2 +f_{11}^2 \Biggr)$$
$$f_{(i-k+1)(j+k)}^2 \leqq C_2 \Biggl( \sum_{i=2}^{i-k+1} f_{i1}^2 + \sum_{j=2}^{j+k} f_{1j}^2 +f_{11}^2 \Biggr)$$
$$f_{(i-k)(j+k)}^2 \leqq C_3 \Biggl( \sum_{i=2}^{i-k} f_{i1}^2 + \sum_{j=2}^{j+k} f_{1j}^2 +f_{11}^2 \Biggr).$$
Using the above three inequalities, 
\begin{eqnarray*}
f_{(i-k+1)(j+k+1)}^2 &\leqq&\!C(i\!-\!k\!+\!1,j\!+\!k\!+\!1)\Biggl\{ \\
&& (C_1\!+\!C_2\!+\!C_3)\Biggl( \sum_{i=2}^{i-k} f_{i1}^2 + \sum_{j=2}^{j+k} f_{1j}^2 +f_{11}^2 \Biggr) \\
&& +C_1 f_{1j+k+1}^2 +C_2 f_{(i-k+1)1}^2
\Biggr\}.
\end{eqnarray*}
Hence we set $C:=\!C(i\!-\!k\!+\!1,j\!+\!k\!+\!1)(C_1\!+\!C_2\!+\!C_3)$, the inequality(\ref{induc}) is attained because of  $C_1,C_2,C_3>0$.

(Case 2-2) In case of $k=1-j$.

Using inequality(\ref{element}), 
$$f_{(i+j)2}^2 \leqq C(i+j,2)(f_{(i+j-1)2}^2 + f_{(i+j)1}^2 + f_{(i+j-1)1}^2).$$
According to the above assumption, $\exists C'>0$ s.t. 
$$f_{(i+j-1)(2)}^2 \leqq C' \Biggl( \sum_{i=2}^{i+j-1} f_{i1}^2 + f_{12}^2 +f_{11}^2 \Biggr).$$
Using the above inequality,  
$$f_{i+j2}^2 \leqq C(i+j,2)\Biggl\{ C' \Biggl( \sum_{i=2}^{i+j-1} f_{i1}^2 + f_{12}^2 +f_{11}^2 \Biggr)
+ f_{(i+j)1}^2 + f_{(i+j-1)1}^2 \Biggr\}.$$
Hence, we set $C:=C(i+j,2)\max\{1,C'\}$, and inequality(\ref{induc}) is attained.

(Case 2-3) In case of $k=i-1$

This can be derived in the same way as Case 2-2 and Lemma \ref{lemineq}, as follows. 
\end{proof}

According to Lemma \rm{\ref{lemineq}}, the next corollary is easily verified. 

\begin{cor}
\label{corineq}
Under the same assumption as Lemma \rm{\ref{lemineq}}, $\exists D>0$ s.t. 
$$ \sum_{i=2}^M f_{i1}^2 + \sum_{j=2}^N f_{1j}^2 +f_{11}^2 \leqq \sum_{i=1}^M \sum_{j=1}^N f_{ij}^2 \leqq D \Biggl( \sum_{i=2}^M f_{i1}^2 + \sum_{j=2}^N f_{1j}^2 +f_{11}^2 \Biggr).$$
\end{cor}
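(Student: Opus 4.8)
The plan is to prove the two inequalities separately. The left-hand inequality is purely combinatorial: the three sums $\sum_{i=2}^M f_{i1}^2$, $\sum_{j=2}^N f_{1j}^2$, and $f_{11}^2$ together enumerate, without repetition, exactly the index pairs $(i,j)$ with $i=1$ or $j=1$, which form a subset of $\{(i,j) : 1\leqq i\leqq M,\ 1\leqq j\leqq N\}$; since every $f_{ij}^2$ is non-negative, their total is at most $\sum_{i=1}^M\sum_{j=1}^N f_{ij}^2$. In fact the left-hand side equals the ``boundary part'' of the full double sum (the terms indexed by the first row or the first column), which is also the key observation for the converse bound.

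For the right-hand inequality, set $S := \sum_{i=2}^M f_{i1}^2 + \sum_{j=2}^N f_{1j}^2 + f_{11}^2$ and split, using the observation above, $\sum_{i=1}^M\sum_{j=1}^N f_{ij}^2 = S + \sum_{i=2}^M\sum_{j=2}^N f_{ij}^2$. It then suffices to bound each interior term $f_{ij}^2$ with $i,j\geqq 2$ by a constant multiple of $S$, and this is precisely what Lemma~\ref{lemineq} supplies. Applying that lemma with $(i,j)$ in place of $(M,N)$ yields a constant $C_{ij}>0$ with $f_{ij}^2 \leqq C_{ij}\bigl(\sum_{k=2}^{i} f_{k1}^2 + \sum_{l=2}^{j} f_{1l}^2 + f_{11}^2\bigr) \leqq C_{ij}\,S$, the last inequality holding because one only adds further non-negative terms. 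Summing over $2\leqq i\leqq M$ and $2\leqq j\leqq N$ gives $\sum_{i=2}^M\sum_{j=2}^N f_{ij}^2 \leqq \bigl(\sum_{i=2}^M\sum_{j=2}^N C_{ij}\bigr) S$, hence $\sum_{i=1}^M\sum_{j=1}^N f_{ij}^2 \leqq D\,S$ with $D := 1 + \sum_{i=2}^M\sum_{j=2}^N C_{ij} > 0$.

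There is essentially no real obstacle here: the corollary is a bookkeeping consequence of Lemma~\ref{lemineq}. The only point needing a little care is that Lemma~\ref{lemineq} is stated only for the ``corner'' entry $f_{MN}$, so one must note that it is really a statement about an arbitrary interior entry after relabelling the matrix dimensions, and that it requires both indices to be at least $2$, which holds on the interior. One should also keep the compactness hypotheses of Lemma~\ref{lemineq} in force --- this is what ``under the same assumption'' refers to --- since they are what make each $C_{ij}$ finite.
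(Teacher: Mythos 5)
Your proof is correct and matches the intent of the paper, which simply asserts the corollary is ``easily verified'' from Lemma~\ref{lemineq}: the left inequality is just non-negativity of the omitted terms, and the right one follows exactly as you argue, by splitting off the interior $\sum_{i,j\geqq 2} f_{ij}^2$ and bounding each interior entry by Lemma~\ref{lemineq} applied with $(i,j)$ in place of $(M,N)$. Your observations about relabelling the lemma's corner index and about the compactness hypotheses keeping each $C_{ij}$ finite are the right points of care, and nothing is missing.
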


We immediately get the next lemma using Corollary \rm{\ref{corineq}}. 
\begin{lem}
\label{lemideal}
When an RLCT of $F$ is equal to one of $G$, it is denoted by $F \sim G$. 
Under the same assumption as Lemma \rm{\ref{lemineq}}, the following relation is attained:
\begin{eqnarray}
\label{ideal}
\sum_{i=1}^M \sum_{j=1}^N f_{ij}^2 \sim \sum_{i=2}^M f_{i1}^2 + \sum_{j=2}^N f_{1j}^2 +f_{11}^2.
\end{eqnarray}
\end{lem}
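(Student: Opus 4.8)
The plan is to read the conclusion directly off the two-sided estimate in Corollary~\ref{corineq}. Write $g := \sum_{i=2}^M f_{i1}^2 + \sum_{j=2}^N f_{1j}^2 + f_{11}^2$ and $F := \sum_{i=1}^M\sum_{j=1}^N f_{ij}^2$. Corollary~\ref{corineq} asserts $g \leqq F \leqq D g$ on the relevant compact parameter domain for some constant $D>0$. So the entire content of the lemma is the general principle --- already invoked in the proof of Lemma~\ref{lemH0} --- that the RLCT is monotone (``order isomorphic'') and invariant under multiplication by a positive constant, and hence is unchanged when a function is replaced by another one sandwiched between it and a constant multiple of it.

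To make that principle explicit I would argue at the level of the zeta function~(\ref{zeta}). For real $z>0$ the inequality $g \leqq F \leqq Dg$ gives $g^z \leqq F^z \leqq D^z g^z$ pointwise, and integrating against the prior $\varphi$ over the compact domain yields
\[
\int g^z \varphi \, \d\theta \;\leqq\; \int F^z \varphi \, \d\theta \;\leqq\; D^z \int g^z \varphi \, \d\theta .
\]
Hence the two functions $z \mapsto \int F^z\varphi\,\d\theta$ and $z\mapsto \int g^z\varphi\,\d\theta$, both holomorphic on $\mathrm{Re}(z)>0$, share the same abscissa of convergence on the positive real axis, so their meromorphic continuations have the same largest pole. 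By the characterisation of the RLCT recalled in Section~2.1, the RLCT of $F$ therefore equals that of $g$, i.e. $F \sim g$, which is precisely~(\ref{ideal}).

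There is essentially no obstacle here: all the work has been front-loaded into Lemma~\ref{lemineq} and Corollary~\ref{corineq}. The only points requiring a little care are to note that the constant $D$ (and the constants produced in Lemma~\ref{lemineq}) are genuinely finite --- this is where compactness of $K$ and positivity of $K_0$ enter, ensuring the ratios arising in the induction, such as $x_i/x_{i-1}$ and $a_ib_j/(x_{i-1}y_{j-1})$, remain bounded --- and to use the same fixed positive prior $\varphi$ on both sides, so that the comparison of the zeta integrals above is literally valid.
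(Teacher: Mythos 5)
Your overall strategy is exactly the paper's: the paper proves Lemma~\ref{lemideal} with the single sentence ``We immediately get the next lemma using Corollary~\ref{corineq},'' relying on the sandwich $g\leqq F\leqq Dg$ together with the facts that the RLCT is order-preserving and invariant under multiplication by a positive constant. You identify both ingredients correctly, and you are right that all the real work was front-loaded into Lemma~\ref{lemineq} and Corollary~\ref{corineq}.

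However, the zeta-function justification you supply is off in one place. For $\mathrm{Re}(z)>0$ the two zeta integrals $\int F^z\varphi\,\d\theta$ and $\int g^z\varphi\,\d\theta$ are \emph{both} finite (the integrands are bounded on a compact domain), so they have no ``abscissa of convergence on the positive real axis'' to compare, and the inequality $\int g^z\varphi \leqq \int F^z\varphi \leqq D^z\int g^z\varphi$ there tells you nothing about poles --- analytic continuation does not preserve inequalities. The comparison must be made where divergence can actually occur, i.e.\ for real $z<0$. There the sandwich reverses: $g\leqq F\leqq Dg$ gives $D^z g^z \leqq F^z \leqq g^z$ pointwise (since $x\mapsto x^z$ is decreasing for $z<0$), hence
\[
D^{z}\int g^z\varphi\,\d\theta \;\leqq\; \int F^z\varphi\,\d\theta \;\leqq\; \int g^z\varphi\,\d\theta \qquad (z<0),
\]
and since $D^z>0$, the two real integrals converge or diverge for exactly the same $z<0$; the common infimum of the set of $z$ where they converge is $-\lambda$. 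Equivalently, and more simply, invoke directly that the RLCT is monotone and scale-invariant: $\lambda(g)\leqq\lambda(F)\leqq\lambda(Dg)=\lambda(g)$. Either fix makes your argument complete; as written, the positive-$z$ comparison is a non sequitur.
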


Using Lemma \rm{\ref{lemideal}, Lemma \ref{lemH1}} is proven. 
\begin{proof}[Proof of Lemma \ref{lemH1}]
We set $X=(x_{i})_{i=1}^M$, $Y=(y_{j})_{j=1}^N$, $A=(a_{i})_{i=1}^M$, $B=(b_{j})_{j=1}^N$, $f_{kl}=x_k y_l -a_k b_l , \ k,l\in \mathbb{N}$. 
On account of Lemma \rm{\ref{lemideal}}, 
$$\| XY-AB \|^2 = \sum_{i=1}^M \sum_{j=1}^N f_{ij}^2 \sim \sum_{i=2}^M f_{i1}^2 + \sum_{j=2}^N f_{1j}^2 +f_{11}^2.$$
Thus, all we have to do is calculate an RLCT of the right side. 
We arbitrarily take $i,j(1 \leqq i \leqq M, 1 \leqq j \leqq N, i,j \in \mathbb{N})$ and fix them.
Let $g:(x_i,y_j) \mapsto (\xi_i,f_{ij})$ be the following coordinate transformation:
\begin{eqnarray*}
  \xi_i &=& x_i \\
  f_{ij} &=& x_i y_j - a_i b_j .
\end{eqnarray*}
The Jacobi matrix of the above transformation is equal to
\[
  \frac{\partial(\xi_i ,f_{ij})}{\partial(x_i,y_j)}= \left(
    \begin{array}{cc}
  \frac{\partial \xi_i}{\partial x_i} & \frac{\partial f_{ij}}{\partial x_i} \\
  \frac{\partial \xi_i}{\partial y_j} & \frac{\partial f_{ij}}{\partial y_j}  \\
    \end{array}
  \right)
  = \left(
    \begin{array}{cc}
  1 & y_j \\
  0 & x_i  \\
    \end{array}
  \right).
\]
Because of
$$\Biggl| \frac{\partial(\xi_i ,f_{ij})}{\partial(x_i,y_j)} \Biggr|=x_i>0,$$
$g$ is an analytic isomorphism. Hence the zeta function of $(x_i y_j - a_i b_j)^2$ is equal to
\begin{eqnarray*}
  \iint_{K^2} (x_i y_j - a_i b_j)^{2z} \mathrm{d}x_i \mathrm{d}y_j 
  &=& \iint_{g(K^2)} f_{ij}^{2z} \xi_i \mathrm{d}\xi_i \mathrm{d}f_{ij}.\\
\end{eqnarray*}
Owing to $\xi_i  =  x_i >0$, $\xi_i$ does not contribute to the maximum pole of the zeta function. Therefore we can consider only 
$$\iint_{g(K^2)} f_{ij}^{2z} \mathrm{d}\xi_i \mathrm{d}f_{ij} .$$
Since $g$ is an analytic isomorphism and $K^2$ is compact, $\exists c_1,c_2,c_3,c_4 \in \mathbb{R} \, (c_1<c_2,c_3<c_4)$ s.t.
$$g(K^2)=\bigl\{(\xi_i,f_{ij}) \in \mathbb{R}_{>0}^2 | c_1 \leqq \xi_i \leqq c_2 \wedge c_3 \leqq f_{ij} \leqq c_4 \bigr\}$$
This is clearly integrable, therefore we calculate an iterated integral. Thus
\begin{eqnarray*}
  \iint_{g(K^2)} f_{ij}^{2z} \mathrm{d}\xi_i \mathrm{d}f_{ij}
  &=& \int_{c_1}^{c_2} \mathrm{d}\xi_i \int_{c_3}^{c_4} f_{ij}^{2z} \mathrm{d}f_{ij} \\ 
  &=& \frac{C}{2z+1} \ \rm{for} \ \exists C \ne 0(\rm{constant}).
\end{eqnarray*}
Therefore an RLCT of $f_{ij}^2=(x_i y_j - a_i b_j)^2$ is equal to 1/2.

According to Lemma \rm{\ref{lemideal}}, $f_{11},f_{21},...,f_{M1},f_{12},...,f_{1N}$ are independent and the number of them is $M+N-1$. 
An RLCT $\lambda$ of $\sum_{i=2}^M f_{i1}^2 + \sum_{j=2}^N f_{1j}^2 +f_{11}^2$ is equal to
$$\lambda =\sum_{i=2}^M \frac{1}{2} + \sum_{j=2}^N \frac{1}{2} +\frac{1}{2}= \frac{M+N-1}{2}. $$
\end{proof}

Let $X \! \in \! \mathrm{M}(M,H,K), Y \! \in \! \mathrm{M}(H,N,K)$ be
$$X\! = \!(x_1,\ldots,x_H) , x_{k}\!=\!(x_{ik})_{i=1}^{M},$$ 
$$Y\! = \!(y_1,\ldots,y_H)^T , y_{k}\!=\!(y_{kj})_{j=1}^{N},$$
and
$A \! \in \! \mathrm{M}(M,H_0,K_0), B \! \in \! \mathrm{M}(H_0,N,K_0)$ be
$$A\! =\! (a_1,\ldots,a_{H_0}) , a_{k}\!=\!(a_{ik})_{i=1}^{M},$$
$$B\! =\! (b_1,\ldots,b_{H_0})^T , b_{k}\!=\!(b_{kj})_{j=1}^{N},$$
respectively.
\begin{lem}
\label{lemH}
In the case of $H=H_0$, the Main Theorem is attained.
\end{lem}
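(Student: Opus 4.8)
The plan is to reduce the full $H = H_0$ case to an $H$-fold sum of the rank-one case already handled in Lemma \ref{lemH1}, by exhibiting a convenient coordinate chart in which the product $XY$ decouples column-by-column of $X$ against row-by-row of $Y$. Writing $XY - AB = \sum_{k=1}^{H} (x_k y_k - a_k b_k)$ entrywise, the natural guess is that the RLCT of $\|XY - AB\|^2$ equals the sum over $k = 1, \dots, H$ of the RLCT of a single rank-one block $\|x_k y_k - a_k b_k\|^2$, each of which is $(M+N-1)/2$ by Lemma \ref{lemH1}; this would give $\lambda = H(M+N-1)/2 = \frac{1}{2}[H_0(M+N-1)]$, which is exactly the bound in Theorem \ref{thm:main} specialized to $H = H_0$. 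So the target value is clear; the work is to justify the decoupling.

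First I would set up the change of variables. Since $A, B$ have entries in the compact set $K_0 \subset \mathbb{R}_{>0}$, all $a_{ik}, b_{kj}$ are bounded away from $0$, so the map sending, for each $k$, the pair $(x_{ik}, y_{kj})$ to a new coordinate $f^{(k)}_{ij} := x_{ik} y_{kj} - a_{ik} b_{kj}$ together with a retained coordinate (as in the proof of Lemma \ref{lemH1}) is an analytic isomorphism with nonvanishing Jacobian on the relevant compact region, and the retained coordinates do not affect the largest pole. This turns $\|XY - AB\|^2$ into $\sum_{i,j} \bigl(\sum_{k=1}^{H} f^{(k)}_{ij}\bigr)^2$. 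Next I would invoke Lemma \ref{lemideal} / Corollary \ref{corineq} blockwise: for each fixed $k$, the family $\{f^{(k)}_{ij}\}$ has the same RLCT behaviour as $\{f^{(k)}_{i1}\}_{i\geq 2} \cup \{f^{(k)}_{1j}\}_{j\geq 2} \cup \{f^{(k)}_{11}\}$, a set of $M+N-1$ genuinely independent coordinates. The key point is that, after this reduction, the $H$ blocks involve disjoint sets of coordinates, so the ideal generated by all the quadratic terms splits as a sum of ideals in independent variables; the RLCT of a sum of polynomials in disjoint variables is the sum of the RLCTs, giving $\lambda = H \cdot \frac{M+N-1}{2}$.

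The main obstacle I anticipate is the cross-term issue: when I replace $\sum_k f^{(k)}_{ij}$ by something diagonal, I must be careful that the squared sum $\bigl(\sum_k f^{(k)}_{ij}\bigr)^2$ is not order-equivalent to $\sum_k (f^{(k)}_{ij})^2$ in general — the single entry $\sum_k f^{(k)}_{ij}$ can vanish without all $f^{(k)}_{ij}$ vanishing. So the decoupling cannot be done entry-by-entry naively; instead one must argue at the level of the whole variety $\{XY = AB\}$, showing that a suitable further change of variables (e.g. keeping $f^{(1)}_{ij}, \dots, f^{(H-1)}_{ij}$ and the sum as the $H$-th new coordinate, which is again an analytic isomorphism by the same Jacobian computation) converts $\bigl(\sum_k f^{(k)}_{ij}\bigr)^2$ into a single squared coordinate while the other $f^{(k)}_{ij}$ become free. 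Combining this with the blockwise application of Lemma \ref{lemideal} and then summing RLCTs over the $H$ now-independent blocks yields the claimed bound, with equality, completing the proof of Lemma \ref{lemH}.
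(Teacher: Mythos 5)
You have the right high-level target (decouple the problem into $H$ rank-one blocks and apply Lemma~\ref{lemH1} to each), but there are two issues, one about what must be proved and one that is a genuine gap in the argument.

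First, you are aiming at more than what Lemma~\ref{lemH} claims. ``The Main Theorem is attained'' here means the inequality $\lambda \leqq \tfrac{1}{2}H_0(M+N-1)$ holds; it does not assert equality. Indeed the paper explicitly states (Section~5.4) that tightness of the bound beyond the $H_0=0$ and $H=H_0=1$ cases is left as future work, so an argument producing equality would be proving something the authors do not claim and probably cannot prove. Because only the upper bound is needed, the ``cross-term issue'' you worry about is a non-issue: the elementary estimate $\bigl(\sum_{k=1}^{H} z_k\bigr)^2 \leqq H\sum_{k=1}^{H}z_k^2$ gives
$\| XY-AB \|^2 \leqq C\sum_{k=1}^{H}\bigl\| x_k y_k^{T} - a_k b_k^{T}\bigr\|^2$
directly, and since the RLCT is monotone under such domination the RLCT of $\|XY-AB\|^2$ is at most that of the right-hand side. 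That right-hand side splits into $H$ summands in pairwise disjoint variables $(x_k,y_k)$, so by Lemma~\ref{lemH1} each contributes $(M+N-1)/2$ and the desired bound follows. This is exactly the paper's route and is much shorter than your plan.

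Second, your proposed fix for the equality direction does not work. You suggest treating the quantities $f^{(k)}_{ij}=x_{ik}y_{kj}-a_{ik}b_{kj}$ as if they were coordinates and making a linear change keeping $f^{(1)}_{ij},\dots,f^{(H-1)}_{ij}$ and the sum $\sum_k f^{(k)}_{ij}$. But for each fixed $k$ there are $MN$ quantities $f^{(k)}_{ij}$ and only $M+N$ parameters $x_{ik},y_{kj}$, so for $M,N\geqq 2$ the $f^{(k)}_{ij}$ satisfy many algebraic relations and cannot be taken as independent coordinates. The Jacobian argument from Lemma~\ref{lemH1} applies only to a single entry $(i,j)$ at a time; it does not extend to a global change of variables turning all the $f^{(k)}_{ij}$ into free coordinates, and a ``retain-$(H-1)$-plus-sum'' linear relabeling of dependent quantities does not give you an analytic isomorphism of the parameter space. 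So even setting aside that equality is not what is asked for, the step you lean on to get it is not available.
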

\begin{proof}
\begin{eqnarray*}
&& \| XY-AB \|^2 \\
&=& \sum_{i=1}^M \sum_{j=1}^N (x_{i1}y_{1j} + ...+ x_{iH}y_{Hj} - a_{i1}b_{1j} - a_{iH}b_{Hj})^2 \\
&=& \sum_{i=1}^M \sum_{j=1}^N \Biggl( \sum_{k=1}^H (x_{ik}y_{kj} - a_{ik}b_{kj}) \Biggr)^2 \\
& \leqq & C \sum_{i=1}^M \sum_{j=1}^N \sum_{k=1}^H  (x_{ik}y_{kj} - a_{ik}b_{kj})^2 \ \rm{for} \ \exists C>0(\rm{const.}) \\
& \sim & \sum_{i=1}^M \sum_{j=1}^N \sum_{k=1}^H  (x_{ik}y_{kj} - a_{ik}b_{kj})^2 \\
&=& \sum_{k=1}^H \Biggl( \sum_{i=1}^M \sum_{j=1}^N (x_{ik}y_{kj} - a_{ik}b_{kj})^2 \Biggr) \\
&=& \sum_{k=1}^H 
\left\|
  x_k y_k^T - a_k b_k^T
\right\|^2.
\end{eqnarray*}
The variables in the $\sum$ of the last term are independent for $k$. That causes, using Lemma \rm{\ref{lemH1}}, 
an RLCT $\tilde{\lambda}$ of the term is equal to 
$$\tilde{\lambda} = \sum_{k=1}^H \left( \frac{M+N-1}{2} \right) = H\frac{M+N-1}{2}.$$
An RLCT is order isomorphic therefore we get
$$\lambda \leqq \tilde{\lambda} =H\frac{M+N-1}{2}.$$
\end{proof}
%
\section{Proof of Main Theorem}
\begin{proof}[Proof of Main Theorem]
\begin{eqnarray*}
  && \| XY-AB \|^2  \\
  &=& \sum_{i=1}^M \sum_{j=1}^N (x_{i1}y_{1j} + ... + x_{iH}y_{Hj} 
  - a_{i1}b_{1j} - ... - a_{iH_0}b_{H_0j})^2 \\
  &=& \sum_{i=1}^M \sum_{j=1}^N \Biggl( \sum_{k=1}^H x_{ik}y_{kj} - \sum_{k=1}^{H_0} a_{ik}b_{kj} \Biggr)^2 \\
  &=& \sum_{i=1}^M \sum_{j=1}^N \Biggl( \sum_{k=1}^{H_0} (x_{ik}y_{kj} - a_{ik}b_{kj}) 
+ \sum_{k=H_0 +1}^H x_{ik}y_{kj} \Biggr)^2 \\
  & \leqq & C \sum_{i=1}^M \sum_{j=1}^N 
\left( \sum_{k=1}^{H_0}  (x_{ik}y_{kj} - a_{ik}b_{kj})^2 + \sum_{k=H_0 +1}^H x_{ik}^2 y_{kj}^2 \right)  \ \rm{for} \ \exists C>0(\rm{const.}) \\
  & \sim & \sum_{i=1}^M \sum_{j=1}^N 
\left( \sum_{k=1}^{H_0}  (x_{ik}y_{kj} - a_{ik}b_{kj})^2 + \sum_{k=H_0 +1}^H x_{ik}^2 y_{kj}^2 \right) \\
  &=& \sum_{k=1}^{H_0} \Biggl( \sum_{i=1}^M \sum_{j=1}^N (x_{ik}y_{kj} - a_{ik}b_{kj})^2 \Biggr) 
+\sum_{k=H_0 +1}^H \Biggl( \sum_{i=1}^M \sum_{j=1}^Nx_{ik}^2 y_{kj}^2 \Biggr) \\
  &=& \sum_{k=1}^{H_0}
\left\|
 x_k y_k^T - a_k b_k^T
\right\|^2 
 + 
\left\|
  \left(
    \begin{array}{ccc}
  x_{1(H_0 +1)} & \ldots & x_{1H} \\
  \vdots & \ddots & \vdots \\
  x_{M(H_0 +1)} & \ldots & x_{MH} \\
    \end{array}
  \right) 
  \left(
   \begin{array}{ccc}
  y_{(H_0 +1)1} & \ldots & y_{(H_0 +1)N} \\
  \vdots & \ddots & \vdots \\
  y_{H1} & \ldots & y_{HN} \\
    \end{array}
  \right)
\right\|^2.
\end{eqnarray*}

Let $\tilde{\lambda}$ be an RLCT of the right-most side, $\lambda_1$ be an RLCT of 1st term in the right-most side, and $\lambda_2$ be an RLCT of the 2nd one. 
Since variables are independent and RLCTs are order isomorphic, 
\begin{eqnarray}
  \label{rlctineq}
  \lambda \leqq \tilde{\lambda} = \lambda_1 + \lambda_2.
\end{eqnarray}
Since the 1st term corresponds to the proof of Lemma \rm{\ref{lemH}} in the case of $H \leftarrow H_0$, 
$$\lambda_1 = H_0 \frac{M+N-1}{2}.$$
In contrast, the 2nd term corresponds to Lemma \rm{\ref{lemH0}} in the case of $H \leftarrow H-H_0$. That causes
$$\lambda_2 = \frac{(H-H_0)\min\{M,N\}}{2}.$$
Using the above two equalities for inequality(\ref{rlctineq}), 
\begin{eqnarray*}
\lambda &\leqq& \tilde{\lambda} \\
&=& \lambda_1 + \lambda_2\\
&=& H_0 \frac{M+N-1}{2} + \frac{(H-H_0)\min\{M,N\}}{2}. 
\end{eqnarray*}
Therefore
$$\lambda \leqq \frac{1}{2} \left[ (H-H_0)\min\{M,N\} +H_0 (M+N-1) \right].$$
\end{proof}

\begin{rem}
Under the same assumption of the Main Theorem, suppose 
\[
  f_{ij}^k = \begin{cases}
    x_{ik}y_{kj} - a_{ik}b_{kj} & (k \in \{1,\ldots,H_0 \}) \\
    x_{ik}y_{kj} & (k \in \{H_0+1,\ldots,H \})
  \end{cases}.
\]
If $f_{ij}^k \geqq 0$, i.e. , $ x_{ik}y_{kj} - a_{ik}b_{kj} \geqq 0 \quad (k \in \{1,\ldots,H_0 \})$, 
$$\lambda = \frac{1}{2} \left[ (H-H_0)\min\{M,N\} +H_0 (M+N-1) \right].$$
\end{rem}

\begin{proof}
Owing to $f_{ij}^k \geqq 0$, 
$$\sum_{k=1}^H (f_{ij}^k)^2 \leqq \left( \sum_{k=1}^H f_{ij}^k \right)^2 \leqq H \sum_{k=1}^H (f_{ij}^k)^2.$$
Thus, 
$$\sum_{k=1}^H (f_{ij}^k)^2 \sim \left( \sum_{k=1}^H f_{ij}^k \right)^2.$$
Using the above relation, 
\begin{eqnarray*}
  && \| XY-AB \|^2  \\
  &=& \sum_{i=1}^M \sum_{j=1}^N \Biggl( \sum_{k=1}^{H_0} (x_{ik}y_{kj} - a_{ik}b_{kj}) 
+ \sum_{k=H_0 +1}^H x_{ik}y_{kj} \Biggr)^2 \\
  &=& \sum_{i=1}^M \sum_{j=1}^N \Biggl( \sum_{k=1}^{H} f_{ij}^k \Biggr)^2 \\
  & \sim & \sum_{i=1}^M \sum_{j=1}^N 
\left( \sum_{k=1}^{H}  (f_{ij}^k)^2 \right) \\
  &=& \sum_{i=1}^M \sum_{j=1}^N 
\left( \sum_{k=1}^{H_0}  (x_{ik}y_{kj} - a_{ik}b_{kj})^2 + \sum_{k=H_0 +1}^H x_{ik}^2 y_{kj}^2 \right) \\
  &=& \sum_{k=1}^{H_0}
\left\|
 x_k y_k^T - a_k b_k^T
\right\|^2 
 + 
\left\|
  \left(
    \begin{array}{ccc}
  x_{1(H_0 +1)} & \ldots & x_{1H} \\
  \vdots & \ddots & \vdots \\
  x_{M(H_0 +1)} & \ldots & x_{MH} \\
    \end{array}
  \right) 
  \left(
   \begin{array}{ccc}
  y_{(H_0 +1)1} & \ldots & y_{(H_0 +1)N} \\
  \vdots & \ddots & \vdots \\
  y_{H1} & \ldots & y_{HN} \\
    \end{array}
  \right)
\right\|^2.
\end{eqnarray*}
Therefore
$$\lambda= \frac{1}{2} \left[ (H-H_0)\min\{M,N\} +H_0 (M+N-1) \right].$$
\end{proof}

\section{Discussion}

In this section, we discuss the result of this paper from four points of view. 

\subsection{Minus Log Bayesian Marginal Likelihood}
First, let us discuss the asymptotic expansion of the minus log Bayesian marginal likelihood, which is defined by
\[
F=-\log\int \varphi(X,Y)\prod_{i=1}^n p(W_i|X,Y)dXdY,
\]
and is sometimes used in model selection and hyperparameter optimization. 
It is well known \cite{Watanabe1} that $F$ has an asymptotic expansion, 
\[
F= -\sum_{i=1}^n\log q(W_i)+\lambda\log n +o_p(\log n),
\]
where $\lambda$ is an RLCT. Therefore, using the results of this paper, we can also derive the upper bound of 
the minus log Bayesian marginal likelihood. 

There is an information criterion called ''singular BIC'' or ''sBIC''. Singular BIC uses the exact value of RLCT or its upper bound in order to approximate the minus log marginal likelihood of singular model \cite{Drton}. NMF is singular statistical model thus our theoretical result can be applied to model selection problems in NMF by using sBIC.

\subsection{Relationship with Reduced Rank Regression}\label{DiscRRR}
Second, the tightness of the upper bound is considered. If the observed $\{W_i\}$ are taken from the 
same probability density function $q(W)$ and if the coefficients of parameter $(X,Y)$ are not limited to 
non-negative values but can be made plus and minus values, then the RLCT is equal to that of the reduced 
rank regression, since the zeta function of NMF is equal to that of reduced rank regression. 
The RLCT of reduced rank regression was clarified in \cite{Aoyagi1}, which is a lower bound of NMF. 
The smallest inner matrix dimension of an NMF is called a nonnegative rank \cite{Cohen}. Suppose $C \in \mathrm{M}(M,N,K)$. Let $\mathrm{rank}_+ C$ be the nonnegative rank of $C$. The following inequality is known \cite{Cohen}:
$$\mathrm{rank} C \leqq \mathrm{rank}_+ C \leqq \min \{ M,N \}.$$
We can also prove that the RLCT of  reduced rank regression is lower than the RLCT of NMF 
in cases where the equality of the Main Theorem is attained, i.e. , in case of Lemma \ref{lemH0} and Lemma \ref{lemH1}. Especially in the case of Lemma \ref{lemH1}, the equal sign holds.
We prove it below.
\begin{proof}
Let $\lambda_{RRR}^r$ and $\lambda_{NMF}^{k}$ be the RLCT of reduced rank regression and the RLCT of NMF in case of Lemma  $k$  where $r$ is the rank of true parameter $AB$. We only have to consider $r=0$ or $r=1$, respectively.
Because of the Main Theorem and Lemma \ref{lemH0} and Lemma \ref{lemH1}, 
\begin{eqnarray*}
\lambda_{NMF}^{\mbox{\ref{lemH0}}} &=& \frac{H\min\{M,N\}}{2}, \\
\lambda_{NMF}^{\mbox{\ref{lemH1}}} &=& \frac{M+N-1}{2}.
\end{eqnarray*}
In contrast, $\lambda_{RRR}^r$ is equal to the following \cite{Aoyagi1} :
\begin{equation}
\label{RRR}
\lambda_{RRR}^r = 
\begin{cases}
\left\{ 2(H+r)(M+N)-(M-N)^2-(H+r)^2 \right\} / 8 & \\
\quad {\rm if}(N+r\leqq M+H \wedge M+r\leqq N+H \wedge H+r\leqq M+N \wedge M+H+N+r:{\rm even}) & \\
\left\{ 2(H+r)(M+N)-(M-N)^2-(H+r)^2 +1 \right\} / 8 & \\
\quad {\rm if}(N+r\leqq M+H \wedge M+r\leqq N+H \wedge H+r\leqq M+N \wedge M+H+N+r:{\rm odd}) & \\
(HM-Hr+Nr) / 2 \quad {\rm if}(M+H<N+r) & \\
(HN-Hr+Mr) / 2 \quad {\rm if}(N+H<M+r) & \\
MN / 2 \quad {\rm if}(M+N<H+r). &
\end{cases}
\end{equation}

\noindent(Case 1) $r=0$ i.e. in the case of Lemma  \ref{lemH0}.

(Case 1-1) $N\leqq M+H \wedge M\leqq N+H \wedge H\leqq M+N \wedge M+H+N:even$.

We assume $N \leqq M$, i.e. , $\lambda_{NMF}^{\mbox{\ref{lemH0}}} = HM/2$. Owing to equality (\ref{RRR}),
\begin{eqnarray*}
\lambda_{RRR}^0 
&=& \left\{ 2H(M+N)-(M-N)^2-H^2 \right\} / 8 \\
&=& \left\{ 2(MH+HN+NM)-M^2-N^2-H^2 \right\} / 8. \\
 \lambda_{RRR}^0-\lambda_{NMF}^{\mbox{\ref{lemH0}}}
&=& -\left\{ M^2+H^2+N^2-2(MH+HN+NM)-4HM \right\} / 8 \\
&=& -\left\{ M^2+H^2+N^2-2(-MH+HN+NM) \right\} / 8 \\
&=& -\left[ (-M)^2+(-H)^2+N^2+2\{(-M)(-H)+(-H)N+N(-M)\} \right] / 8 \\
&=& -(N-M-H)^2 /8 \leqq 0
\end{eqnarray*}
Therefore
$$ \lambda_{RRR}^0 \leqq \lambda_{NMF}^{\mbox{\ref{lemH0}}}.$$
If $N>M$, that can be derived in the same way as above.

(Case 1-2) $N\leqq M+H \wedge M\leqq N+H \wedge H\leqq M+N \wedge M+H+N:odd$.

We assume $N \leqq M$, i.e. , $\lambda_{NMF}^{\mbox{\ref{lemH0}}} = HM/2$.
In the same way as Case 1-1,
\begin{eqnarray*}
\lambda_{RRR}^0-\lambda_{NMF}^{\mbox{\ref{lemH0}}}
&=&1/8 -(N-M-H)^2 /8 \\ 
&=&1^2/8 -(M+H-N)^2 /8 \\
&=& -(M+H-N+1)(M+H-N-1) /8.
\end{eqnarray*}
If $H=0$, $N+1 \leqq M$ is attained because of that $M+N$ must be odd and $N \leqq M$, hence $\lambda_{RRR}^0 \leqq \lambda_{NMF}^{\mbox{\ref{lemH0}}}$.
Else, owing to $H \geqq 1$ and $N \leqq M$, 
\begin{eqnarray*}
N+1 &\leqq& M+1 \\
&\leqq& M+H.
\end{eqnarray*}
We also get $N-1 \leqq N+1 \leqq M+H$ thus
$$-(M+H-N+1)(M+H-N-1) \leqq 0.$$
Therefore, $$\lambda_{RRR}^0 \leqq \lambda_{NMF}^{\mbox{\ref{lemH0}}}.$$
If $N>M$, that can be derived in the same way as above.

(Case 1-3) $N+H<M$ i.e. $N<N+H<M$.

On account of $N<M$,$\lambda_{NMF}^{\mbox{\ref{lemH0}}}=HM/2=\lambda_{RRR}^0$.

(Case 1-4) $M+H<N$ i.e. $M<M+H<N$.

In the same way as Case 1-3, $\lambda_{NMF}^{\mbox{\ref{lemH0}}}=HN/2=\lambda_{RRR}^0$.

(Case 1-5) $M+N<H$ i.e. $N < M+N <H \wedge M < M+N <H$.

On account of $M<H \wedge N<H$, $MN<HN \wedge MN<HM$ i.e. 
$MN < H \min\{M,N \}$.
Thus 
$$\lambda_{NMF}^{\mbox{\ref{lemH0}}}=H \min\{M,N \}/2>MN/2=\lambda_{RRR}^0 .$$

From the above, $\lambda_{RRR}^0 \leqq \lambda_{NMF}^{\mbox{\ref{lemH0}}}$.

\noindent(Case 2) $r=1 \wedge H=1$ i.e. in case of Lemma \ref{lemH1}.

As for the claim $\lambda_{RRR}^1 \leqq \lambda_{NMF}^{\mbox{\ref{lemH1}}}$, 
we only have to prove in the case of $M+H<N+r$ and $N+H<M+r$ since it contradicts in other cases. 
In the first and second cases, owing to $r=H=1$, it becomes $M<N \wedge N<M$ and contradicts. In the last(fifth) case, it is $2 \leqq M+N<2$ and conflicts. Thus we consider the third and fourth cases:$M+H<N+r$ and $N+H<M+r$, respectively.

(Case 2-1) $M+H<N+r$ i.e. $M<N$.

According to the equality (\ref{RRR}) and Lemma  \ref{lemH1}, 
\[
\lambda_{RRR}^1 
= \frac{M+N-1}{2} 
= \lambda_{NMF}^{\mbox{\ref{lemH1}}}.
\]

(Case 2-2) $N+H<M+r$, i.e. , $N<M$.

That can be derived in the same way as Case 2-1.

From the above, $\lambda_{RRR}^1 = \lambda_{NMF}^{\mbox{\ref{lemH1}}}$.
\end{proof}
That is why the RLCT of the reduced rank regression is considered as a lower bound of the RLCT of the NMF.

$H_0$ and $r$ are denoted by the non-negative rank and the usual rank respectively. We compare the RLCT of reduced rank regression with the exact value and upper bound and summarize in Table\ref{comparison} below in Table section. Note that there is no non-negative matrix which has larger rank than non-negative rank. Moreover, if $M \leqq 3$ or $N \leqq 3$, then $\mathrm{rank} C = \mathrm{rank}_+ C$, where $C \in \mathrm{M}(M,N,K)$\cite{Cohen}. For example, the following $4 \times 4$ non-negative matrix $C$ satisfying
 $\mathrm{rank} C=3$ and $\mathrm{rank}_+ C=4$ is well-known \cite{Cohen}:
\[
C=\left(
    \begin{array}{cccc}
	1 & 1 & 0 & 0 \\
	1 & 0 & 1 & 0 \\
	0 & 1 & 0 & 1 \\
	0 & 0 & 1 & 1 \\
    \end{array}
  \right).
\]

\subsection{Robustness on Distribution}
Third, we study the generalization of the result to another distribution. 
In Theorem\ref{thm:bayes}, we studied a case where
the matrix $W$ is generated by normal distributions whose averages are  $AB$ and $XY$.
Then Kullback-Leibler divergence $KL(X,Y)$ satisfies $KL(X,Y) \sim \| XY-AB\|^2$, as is well known \cite{Aoyagi1}. 
If $W$ is generated by Poisson distributions, due to the conditions of the parameters of these distributions, elements of $XY$ are restricted by positive elements. However, the Kullback-Leibler divergence has same RLCT as the square error if elements of $XY$ are positive. 
\begin{prop}\label{poissonNMF}
Let the probability density functions of $W\in \mathrm{M}(M,N,K)$ be $q(W)$ and $p(W|X,Y)$, which represent 
a true distribution and a learning machine respectively defined by 
\begin{eqnarray*}
q(W) & \propto & \pois (W|AB), \\
p(W|X,Y) & \propto & \pois (W|XY),
\end{eqnarray*}
where $\pois(W|C)$ is a probability density function of the Poisson distribution with average C.
Also let $\varphi(X,Y) $ be a probability density function such that it is positive on a compact subset of 
$\mathrm{M}(M,H,K_0)$ $\times$ $\mathrm{M}(H,N,K_0)$.
Then, the Kullback-Leibler divergence has the same RLCT as the square error.
\end{prop}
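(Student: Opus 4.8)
The goal is to show that for the Poisson model, the Kullback--Leibler divergence $K(X,Y)$ between $\pois(W|AB)$ and $\pois(W|XY)$ has the same RLCT as $\|XY-AB\|^2$. The strategy is to reduce the KL divergence to a quadratic form by a Taylor/positivity argument, exactly as is standard for reduced rank regression, and then invoke the Main Theorem. First I would write out the KL divergence explicitly. For a single Poisson coordinate with true mean $c>0$ and model mean $\mu>0$, the KL divergence is $d(c,\mu) = c\log(c/\mu) - c + \mu$. Summing over the $M\times N$ entries, with $c = (AB)_{ij}$ and $\mu = (XY)_{ij}$, gives $K(X,Y) = \sum_{i,j} d\big((AB)_{ij},(XY)_{ij}\big)$.

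The key step is a pointwise two-sided bound $c_1 (\mu-c)^2 \leqq d(c,\mu) \leqq c_2 (\mu-c)^2$ valid uniformly on the relevant compact parameter region. The lower bound and the local upper bound come from Taylor expanding $d(c,\mu)$ in $\mu$ around $\mu=c$: one has $\partial_\mu d = 1 - c/\mu$, which vanishes at $\mu=c$, and $\partial_\mu^2 d = c/\mu^2 > 0$, so $d(c,\mu) = \tfrac{c}{2\tilde\mu^2}(\mu-c)^2$ for some $\tilde\mu$ between $c$ and $\mu$ by Taylor's theorem with remainder. Because the true matrices $A,B$ have entries in the compact set $K_0 \subset \mathbb{R}_{>0}$, the true means $(AB)_{ij}$ are bounded away from $0$ and above; and because $\varphi$ is supported on a compact subset of $\mathrm{M}(M,H,K_0)\times\mathrm{M}(H,N,K_0)$, the model means $(XY)_{ij}$ are also bounded away from $0$ and above on the support of the prior. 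Hence $\tilde\mu$ ranges over a compact subset of $\mathbb{R}_{>0}$, so $c/(2\tilde\mu^2)$ is bounded between two positive constants $c_1, c_2$. This is the step I expect to be the main obstacle — not because it is deep, but because it is exactly where the hypothesis ``$\varphi$ positive on a compact subset of $\mathrm{M}(M,H,K_0)\times\mathrm{M}(H,N,K_0)$'' (note $K_0$, not $K$) is essential: if the model means were allowed to hit $0$, the factor $c/\tilde\mu^2$ would blow up and the equivalence with the square error would fail.

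Given the two-sided bound, summing over $(i,j)$ yields
\[
c_1 \sum_{i,j}\big((XY)_{ij}-(AB)_{ij}\big)^2 \leqq K(X,Y) \leqq c_2 \sum_{i,j}\big((XY)_{ij}-(AB)_{ij}\big)^2,
\]
i.e. $c_1 \|XY-AB\|^2 \leqq K(X,Y) \leqq c_2 \|XY-AB\|^2$. Since the RLCT is unchanged under multiplication by a positive constant and is order-isomorphic (monotone under such two-sided bounds), as used repeatedly in Lemmas \ref{lemH0}--\ref{lemH} and in the proof of the Main Theorem, we conclude $K(X,Y) \sim \|XY-AB\|^2$. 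This is precisely the assertion of the proposition, and combined with the Main Theorem it shows that the same upper bound on $\lambda$ — and hence on $\mathbb{E}[G]$ — holds for the Poisson NMF model. I would close by remarking that the positivity restriction on $XY$ is exactly what makes the remark after the Main Theorem applicable here, so that when the equality cases of the Main Theorem are met the generalization error bound is again tight.
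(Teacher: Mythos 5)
Your proof is correct and takes essentially the same route as the paper's: reduce the KL divergence to a per-entry function $K(a,b)=b-a+a\log(a/b)$, establish a two-sided bound $c_1(b-a)^2 \leqq K(a,b) \leqq c_2(b-a)^2$ uniformly over the compact parameter region, and conclude $K(X,Y)\sim\|XY-AB\|^2$ by the order-isomorphism of the RLCT. The only difference is in how the two-sided bound is justified --- the paper argues informally from the signs of $\partial_a K,\partial_b K$ and ``smoothness,'' whereas you invoke Taylor's theorem with Lagrange remainder ($d(c,\mu)=\tfrac{c}{2\tilde\mu^2}(\mu-c)^2$) plus compactness; your version is somewhat more explicit but substantively the same.
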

\begin{proof}
Let $x \in \mathbb{N} \cup \{0\}$, $a>0$, $b>0$. 
We put 
\begin{eqnarray*}
  p(x|a) &:=& \frac{e^{-a} a^x}{x!}, \\
  K(a,b) &:=& \sum^{\infty}_{x=0}p(x|a) \mathrm{log} \frac{p(x|a)}{p(x|b)}.
\end{eqnarray*}
Using
\begin{eqnarray*}
  \mathrm{log} \frac{p(x|a)}{p(x|b)} &=& \mathrm{log} \frac{e^{-a} a^x}{x!} - \mathrm{log} \frac{e^{-b} b^x}{x!} \\
  &=& -a + x \mathrm{log} a +b - x \mathrm{log} b \\
  &=& b-a + x \mathrm{log} \frac{a}{b}
\end{eqnarray*}
and
$$\sum^{\infty}_{x=0}p(x|a)=1,$$
\begin{eqnarray}
  K(a,b) &=& b-a + \mathrm{log} \frac{a}{b} \sum^{\infty}_{x=1} a \frac{e^{-a} a^{x-1}}{(x-1)!} \nonumber \\
  &=& b-a + a \mathrm{log} \frac{a}{b}. \label{Idiv}
\end{eqnarray}
Then, owing to
\begin{eqnarray*}
\partial_a K(a,b) &=& \mathrm{log}a - \mathrm{log}b, \\
\partial_b K(a,b) &=& 1-a/b,
\end{eqnarray*}
and that a log function is monotone increasing, 
$$\partial_a K(a,b)=\partial_b K(a,b)=0 \Leftrightarrow a=b.$$
Signs of the above partial derivations are
$$\partial_a K(a,b)>0 \wedge \partial_b K(a,b)<0 \, \rm{in \ case \ of \ }a>b$$
$$\partial_a K(a,b)<0 \wedge \partial_b K(a,b)>0 \, \rm{in \ case \ of \ }a<b$$

On account of that and smoothness, the increase or decrease and convexity of $K(a,b)$ is the same as those of $(b-a)^2$.
Hence $\exists c_1 , c_2>0$ s.t. 
\begin{eqnarray}
\label{KL2RSS}
c_1 (b-a)^2 \leqq K(a,b) \leqq c_2 (b-a)^2.
\end{eqnarray}
i.e. $K(a,b) \sim (b-a)^2$.

We assumed that elements of matrices are generated by Poisson distributions. Using inequality(\ref{KL2RSS}) for each element, 
$$\mbox{Kullback-Leibler divergence} \sim \|XY-AB\|^2$$
,where $a$ is an element of $AB$ and $b$ is an element of $XY$.
\end{proof}

We also consider the case which $W$ is generated by exponential distribution. The elements of $XY$ must be restricted
by positive ones as in the case of Poisson. However, also in this case, the Kullback-Leibler divergence has same RLCT as the square error if elements of $XY$ are positive. 

\begin{prop}\label{expoNMF}
Let the probability density functions of $W\in \mathrm{M}(M,N,K)$ be $q(W)$ and $p(W|X,Y)$, which represent 
a true distribution and a learning machine respectively defined by 
\begin{eqnarray*}
q(W) & \propto & \expo (W|AB), \\
p(W|X,Y) & \propto & \expo (W|XY),
\end{eqnarray*}
where $\expo(W|C)$ is a probability density function of  the exponential distribution with average C.
Also let $\varphi(X,Y) $ be a probability density function such that it is positive on a compact subset of 
$\mathrm{M}(M,H,K_0)$ $\times$ $\mathrm{M}(H,N,K_0)$.
Then, the Kullback-Leibler divergence has same RLCT as the square error.
\end{prop}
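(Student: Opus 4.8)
The plan is to follow the same two-step strategy as in the proof of Proposition~\ref{poissonNMF}: first establish a one-dimensional comparison between the Kullback-Leibler divergence of two exponential distributions and the squared difference of their means, and then apply that comparison entrywise to $XY$ and $AB$ to lift it to the matrix statement.

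For the one-dimensional step, I would write the density of the exponential distribution with mean $c>0$ as $p(x|c)=c^{-1}\exp(-x/c)$ on $x>0$, so that $\log\frac{p(x|a)}{p(x|b)}=\log\frac{b}{a}+x\bigl(\tfrac{1}{b}-\tfrac{1}{a}\bigr)$, and then, using $\int_0^\infty x\,p(x|a)\,\d x=a$, obtain the closed form $K(a,b)=\log\frac{b}{a}+\frac{a}{b}-1$. Differentiating gives $\partial_a K(a,b)=\frac{a-b}{ab}$ and $\partial_b K(a,b)=\frac{b-a}{b^2}$, which vanish simultaneously exactly at $a=b$ and whose signs off the diagonal match those of $\partial_a(b-a)^2$ and $\partial_b(b-a)^2$; since moreover $K(a,a)=0$ and $K$ is smooth on $\mathbb{R}_{>0}^2$, the monotonicity and convexity of $K(a,b)$ mirror those of $(b-a)^2$, so on the compact set $K_0\times K_0\subset\mathbb{R}_{>0}^2$ there exist $c_1,c_2>0$ with $c_1(b-a)^2\leqq K(a,b)\leqq c_2(b-a)^2$, i.e.\ $K(a,b)\sim(b-a)^2$. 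If a crisper justification of this last inequality is wanted, I would instead observe that the function $(a,b)\mapsto K(a,b)/(b-a)^2$ extends continuously to the diagonal with value $\tfrac12\,\partial_b^2 K(a,a)=\tfrac{1}{2a^2}>0$, hence is strictly positive and continuous on the compact set $K_0\times K_0$, and is therefore bounded above and below there by positive constants.

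Finally, since each entry of $W$ is generated by an exponential distribution whose mean is the corresponding entry of $AB$, while the learning machine assigns to it the corresponding entry of $XY$ (both lying in $K_0$), I would apply the bound $c_1(b-a)^2\leqq K(a,b)\leqq c_2(b-a)^2$ to each of the $MN$ coordinates and sum, with $a$ ranging over the entries of $AB$ and $b$ over the entries of $XY$; this yields Kullback-Leibler divergence $\sim\|XY-AB\|^2$, which is exactly the assertion. The only genuine obstacle is the one-dimensional comparison $K(a,b)\sim(b-a)^2$: the closed form and its derivatives are immediate, but converting the sign and convexity observations into a two-sided bound with uniform constants really uses that $K_0$ is compact and bounded away from $0$ (so that factors like $1/b$ and $1/b^2$ stay controlled); without that the lower constant $c_1$ would degenerate as $b\to 0$.
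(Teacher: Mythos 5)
Your proof follows essentially the same route as the paper's: the same closed form $K(a,b)=\log\frac{b}{a}+\frac{a}{b}-1$, the same partial derivatives, the same sign/convexity comparison to $(b-a)^2$ on the compact set $K_0$, and the same entrywise summation to lift to $\|XY-AB\|^2$. Your alternative justification via continuity of $K(a,b)/(b-a)^2$ on the diagonal is a welcome tightening of the paper's somewhat informal "increase or decrease and convexity" step, but the overall argument is the same.
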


\begin{proof}
Let $x >0$, $a>0$, $b>0$. 
We put 
\begin{eqnarray*}
  p(x|a) &:=& \frac{e^{-x/a}}{a}, \\
  K(a,b) &:=& \int p(x|a) \mathrm{log} \frac{p(x|a)}{p(x|b)} \d x.
\end{eqnarray*}
Using
\begin{eqnarray*}
\mathrm{log} \frac{p(x|a)}{p(x|b)} &=& \log \frac{b}{a} e^{-x/a+x/b} \\
&=& \log b -\log a -\frac{x}{a} + \frac{x}{b},
\end{eqnarray*}
$$\int p(x|a) \d x = 1$$
and
$$\int x p(x|a) \d x = \mathbb{E}[x]=a,$$
\begin{eqnarray}
K(a,b)&=&\int p(x|a) \left( \log b - \log a -\frac{x}{a} + \frac{x}{b} \right)  \d x \nonumber \\
&=& \log b -\log a -1 + \frac{a}{b}. \label{ISdiv}
\end{eqnarray}
We immediately get
\begin{eqnarray*}
\partial_a K(a,b) &=& \frac{1}{b}-\frac{1}{a}, \\
\partial_b K(a,b) &=& \frac{1}{b}-\frac{a}{b^2} \\
&=& \frac{b-a}{b^2}
\end{eqnarray*}
and increase or decrease of $K$.
Thus this proposition can be proved in the same way as Proposition \ref{poissonNMF} as follows. 
\end{proof}

\begin{rem}
The right side of equality$(\ref{Idiv})$ and $(\ref{ISdiv})$ are respectively equal to {\rm ''I-divergence''}{\rm \cite{Tusnady}} and {\rm ''Itakura-Saito-divergence''}{\rm \cite{Itakura}} which are used as criterion of difference between observed matrix and reproduced matrix in NMF{\rm \cite{Lee,Fevotte,Finesso}}.
\end{rem}

As a result, we can apply the Main Theorem to Bayesian learning if we use a Kullback-Leibler divergence or square error as a criterion of difference between $XY$ and $AB$ in cases where elements of matrices are generated by normal, Poisson or exponential distributions. Thus, the upper bound of the average of generalization errors can be clarified if the size and inner dimension of the observed matrix and reproduced matrix. 

\subsection{Future Study}
Lastly, we discuss future study in this subsection. There are two works.

The first is about tightness of our upper bound.
The upper bound derived in this paper is the first theoretical result about Bayesian NMF. 
Hence there is not yet clarified its upper bound which is lower than our upper bound. However, in this paper, we also 
showed that there are cases when the exact values are obtained as Lemma \ref{lemH0} and Lemma \ref{lemH1}. 
Based on the exact cases, it is expected that the upper bound is tight, however, its mathematical proof is the future study. 

The second is about numerical experiments. In NMF, even in non-Bayesian cases\cite{Paatero}, the accurate simulation 
method is not yet established. The numerical calculation of them strongly depends on the initial 
values. It seldom goes to the minimum point because it has many local minima. 
Also in Bayesian cases\cite{Cemgil}, the effective Markov Chain Monte Carlo method ( MCMC ) is not yet devised 
for NMF, where MCMC is a method to construct posterior in numerical Bayesian estimation. 
Thus we have no method by which can be used for verifying the upper bound. 
Our theorem is the first theoretical base for construction of numerical calculation. It is the 
future study to make the numerical method for Bayesian NMF based on our theoretical result.

\section{Conclusion}
The upper bound of a real log canonical threshold of the non-negative matrix factorization is 
derived and an application to Bayesian learning is introduced. 
Future work is to clarify the tightness of the upper bound mathematically and make numerical method to consider numerical verification in Bayesian non-negative matrix factorization. 

\section*{Acknowledgments}

This research was partially supported by the Ministry of Education, Science, Sports and Culture in
Japan, Grant-in-Aid for Scientific Research 15K00331.

\section*{Table}
In this section, we show the table mentioned in Discussion \ref{DiscRRR}. ''-'' means that there is no matrix satisfying the condition written in the table. ''( {\it value} )'' is denoted that this {\it value} is not lower bound of the RLCT of NMF ( i.e. our result ) since there is no non-negative matrix satisfying conditions such as $H_0=4$ and $M=N=3$.
\begin{table}[htb]
\caption{Comparison the RLCT of NMF with that of reduced rank regression}
  \begin{tabular}{|c|c|c|c|c|c|} \hline
   \multicolumn{2}{|l|}{model size and model / matrix size}
      & \multicolumn{1}{c|}{M=N=2} & \multicolumn{1}{c|}{M=N=3} & \multicolumn{1}{c|}{M=N=4} & \multicolumn{1}{c|}{M=N=5} \\ \hline 
	$H=M$,$H_0=0$ & NMF ( exact value ) & \bf{2} & \bf{9/2} & \bf{8} & \bf{25/2} \\ \cline{2-6}
	$r=0$ & reduced rank regression & 3/2 & 7/2 & 6 & 19/2 \\ \hline
	$H=H_0=1$ & NMF ( exact value ) & \bf{3/2} & \bf{5/2} & \bf{7/2} & \bf{9/2} \\ \cline{2-6}
	$r=1$ & reduced rank regression & 3/2 & 5/2 & 7/2 & 9/2 \\ \hline
	$H=H_0=2$ & NMF ( bound ) & \bf{3} & \bf{5} & \bf{7} & \bf{9} \\ \cline{2-6}
	$r=2$ & reduced rank regression & 2 & 4 & 6 & 8 \\ \hline
	$H=H_0=3$ & NMF ( bound ) & \bf{-} & \bf{15/2} & \bf{21/2} & \bf{27/2} \\ \cline{2-6}
	$r=3$ & reduced rank regression & - & 9/2 & 15/2 & 21/2 \\ \hline
	$H=H_0=4$ & NMF ( bound ) & \bf{-} & \bf{-} & \bf{14} & \bf{18} \\ \cline{2-6}
	$r=3$ & reduced rank regression & - & (9/2) & 8 & 23/2 \\ \cline{3-6}
	$r=4$ & \, & - & - & 8 & 12 \\ \hline
	$H=H_0=5$ & NMF ( bound ) & \bf{-} & \bf{-} & \bf{-} & \bf{45/2} \\ \cline{2-6}
	$r=3$ & \, & - & (9/2) & (8) & 12 \\ \cline{3-6}
	$r=4$ & reduced rank regression & - & - & (8) & 25/2 \\ \cline{3-6}
	$r=5$ & \, & - & - & - & 25/2 \\ \hline
  \end{tabular}\label{comparison}
\end{table}

\newpage

\end{document}